\documentclass[12pt]{amsart}
\usepackage{amsthm}
\usepackage{amssymb}
\usepackage{amsmath}
\usepackage{tikz}
\usepackage{url}
\usepackage{cite}
\usepackage[colorinlistoftodos]{todonotes}
\nocite{*}
\usepackage{a4wide}

\theoremstyle{definition} 
\newtheorem{lemma}[subsection]{Lemma}

\newtheorem{proposition}[subsection]{Proposition}

\newtheorem{definition}[subsection]{Definition}

\newtheorem{theorem}[subsection]{Theorem}
\newtheorem{fact}[subsection]{Fact}

\newtheorem*{rep@theorem}{\rep@title}
\newcommand{\newreptheorem}[2]{%
\newenvironment{rep#1}[1]{%
 \def\rep@title{#2 \ref{##1}}%
 \begin{rep@theorem}}%
 {\end{rep@theorem}}}
\makeatother

\newreptheorem{theorem}{Theorem}
\newreptheorem{lemma}{Lemma}

\newcommand{\comment}[1]{}

\title{Local definable $(p,q)$-theorem and a refinement}
\author{Gareth John Boxall, Charlotte Kestner, Tsinjo Rakotonarivo}
\address{Department of Mathematical Sciences (Mathematics Division), University of Stellenbosch, South Africa}
\email{gboxall@sun.ac.za}
\address{Department of Mathematics, Imperial College, London}
\email{c.kestner@imperial.ac.uk}
\address{Department of Mathematics and Applied Mathematics, University of Pretoria, South Africa and DSI-NRF Centre of Excellence in Mathematical and Statistical Sciences (CoE-MaSS), South Africa}
\email{tsinjo@aims.ac.za}
\thanks{The support of the DSI-NRF Centre of Excellence in Mathematical and Statistical Sciences (CoE-MaSS) towards this research is hereby acknowledged. Opinions expressed and conclusions arrived at are those of the authors and are not necessarily to be attributed to the CoE}

\begin{document}

\begin{abstract}
We prove the definable $(p,q)$-conjecture for NIP formulas in arbitrary first-order theories. We also prove a refinement and apply it to dense pairs. 
\end{abstract}

\maketitle

\section{Introduction}

We prove the definable $(p,q)$-conjecture for NIP formulas. This is a first-order definable version of a theorem of Matou\v{s}ek concerning set systems with bounded Vapnik-Chervonenkis dimension. 

A set system is a pair $(X,\mathcal{S})$, where $X$ is a non-empty set and $\mathcal{S}$ is a non-empty collection of subsets of $X$. The dual Vapnik-Chervonenkis dimension $\text{VC}^*(X,\mathcal{S})$ is the maximum $n$ for which there exist $Y_1,...,Y_n\in \mathcal{S}$ such that each of the $2^n$ fields of the Venn diagramme for $Y_1,...,Y_n$ is non-empty, unless there is no such maximum, in which case $\text{VC}^*(X,\mathcal{S})=\infty$. 

\begin{theorem}(Matou\v{s}ek, \cite{Matousek2004})\label{pq}
Let $q\leq p$ be positive integers. There exists a positive integer $n$ such that, for every non-empty set $X$ and finite non-empty set $\mathcal{S}$ of non-empty subsets of $X$, if \begin{enumerate}
    \item $\text{VC}^*(X,\mathcal{S})<q$ and
    \item for every $p$ elements of $\mathcal{S}$ some $q$ of them have non-empty intersection
\end{enumerate} then there exist $a_1,...,a_n\in X$ such that, for every $Y\in \mathcal{S}$, there is some $i\in \{1,...,n\}$ such that $a_i\in Y$.
\end{theorem}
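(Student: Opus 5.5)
We follow the Alon--Kleitman strategy as adapted by Matou\v{s}ek. Fix $q\le p$ and a set system $(X,\mathcal{S})$ satisfying (1) and (2). Since $\mathcal{S}$ is finite, we may first replace $X$ by a finite set: for each cell of the Venn diagram of $\mathcal{S}$ that is non-empty, choose one representative point. This preserves the dual VC-dimension, the $(p,q)$-property, and the conclusion. We then work with the dual set system, whose ground set is $\mathcal{S}$ and whose ranges are the sets $S_x=\{Y\in\mathcal{S}: x\in Y\}$. By hypothesis this dual system has primal VC-dimension $<q$, so the Sauer--Shelah lemma gives shatter function $O(m^{q-1})$. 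By the Haussler--Welzl $\epsilon$-net theorem, or the more elementary Koml\'os--Pach--Woeginger version, every probability measure on $X$ then admits a transversal-type object whose size depends only on $\epsilon$ and $q$. In the form we need: there is $N(\epsilon,q)$ such that, for any probability measure $\mu$ on $\mathcal{S}$ (viewed as a weighting of the sets), some $N$ points of $X$ pierce every $Y\in\mathcal{S}$ of $\mu$-weight at least $\epsilon$. Note that the weighting is on the dual side, so it is weak $\epsilon$-nets for the dual family that we use.

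The reduction goes through LP duality. Let $\tau^*$ be the fractional transversal number: the minimum of $\sum_x w(x)$ over $w:X\to[0,\infty)$ with $\sum_{x\in Y}w(x)\ge1$ for all $Y\in\mathcal{S}$. It equals the fractional packing number $\nu^*$. If $\tau^*\le T$ for some $T=T(p,q)$, then normalizing $w$ gives a probability measure $\mu$ on $X$ in which every $Y$ has measure $\ge 1/T$. The primal $\epsilon$-net theorem, applied now with $\mathcal{S}$ as ranges, needs the primal VC-dimension. This is not directly bounded by (1), but it is bounded in terms of the dual dimension, since $\mathrm{VC}\le 2^{\mathrm{VC}^*+1}$. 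That gives an $\epsilon$-net of size $n=O(T\log T)$ with constants depending only on $q$, which is the desired $a_1,\dots,a_n$.

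Everything therefore reduces to bounding $\nu^*$ by a function of $p,q$. This is the main obstacle. The standard route is a fractional Helly theorem: for every $\alpha>0$ there is $\beta>0$, depending on $\alpha,q$, such that if an $\alpha$-fraction of the $q$-subsets of a finite family (with multiplicity) with $\mathrm{VC}^*<q$ have a common point, then some point lies in a $\beta$-fraction of the family. Matou\v{s}ek proves this for bounded dual shatter function $o(m^q)$ by a counting/double-counting argument: look at which $q$-tuples intersect, use the Sauer--Shelah bound on the number of cells of the Venn diagram of any $m$ members, and apply a supersaturation (Erd\H{o}s--Simonovits style) argument.

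Given fractional Helly, we argue as follows. Take an optimal fractional packing and approximate it by rational weights, i.e.\ a multiset $\mathcal{S}'$ of $M$ sets in which every point is covered by at most $M/\nu^*$ of them. The $(p,q)$-property, via a Ramsey/counting argument, implies that a $1/\binom{p}{q}$-fraction of the $q$-subsets of $\mathcal{S}'$ intersect. Fractional Helly then yields a point in $\beta M$ of the sets, so $\nu^*\le1/\beta$. Repeated copies in the multiset need care; we handle this as in Alon--Kleitman by noting that for $q\ge2$ the $(p,q)$-property forces $p-q+1$-fold repetition bounds, or alternatively by counting only distinct tuples. I expect the fractional Helly step for VC-bounded families to be the real work; the rest is assembling standard tools.
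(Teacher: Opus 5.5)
The paper gives no proof of Theorem~\ref{pq}. It is quoted from \cite{Matousek2004} and used only as a black box, through its compactness form Theorem~\ref{infinite pq}. Your outline follows the route of that source, the Alon--Kleitman scheme: $\tau^*=\nu^*$ by LP duality, fractional Helly to bound $\nu^*$, and an $\epsilon$-net to pass from $\tau^*$ to $\tau$. The frame is sound. The reduction to finite $X$ via Venn-cell representatives (which makes the LP finite) is correct, as are the primal $\epsilon$-net theorem applied to the normalised optimal fractional transversal with $\mathrm{VC}(X,\mathcal{S})<2^{q}$, and the deduction $\nu^*\le 1/\beta$.

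You should delete the paragraph on ``transversal-type objects''. For a measure $\mu$ on $\mathcal{S}$ the statement you write is trivial, since at most $1/\epsilon$ members have weight $\ge\epsilon$. A dual $\epsilon$-net would consist of members of $\mathcal{S}$, not points, and nothing in that paragraph is used. The dual shatter bound $O(m^{q-1})$ enters only through fractional Helly.

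The step that fails as written is the claim that a $1/\binom{p}{q}$-fraction of the $q$-subsets of the multiset $\mathcal{S}'$ intersect. A $p$-element sub-multiset may involve fewer than $p$ distinct sets, and then the $(p,q)$-property says nothing about it. For example, three pairwise disjoint non-empty sets satisfy all hypotheses for $(p,q)=(4,3)$, the optimal packing weights them equally, and the fraction of intersecting triples in $\mathcal{S}'$ tends to $1/9<1/4$. Your remark about ``$p-q+1$-fold repetition bounds'' does not repair this.

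The standard fix is to put $p'=(p-1)(q-1)+1$. Among any $p'$ members of $\mathcal{S}'$, either some set occurs at least $q$ times, and these copies meet because members of $\mathcal{S}$ are non-empty, or at least $p$ distinct sets occur. So $\mathcal{S}'$ has the $(p',q)$-property. Double counting then gives at least $\binom{M}{q}/\binom{p'}{q}$ intersecting $q$-subsets, once the multiplicities are scaled so that $M\ge p'$. Fractional Helly is then applied to a family with repetitions, which is harmless because repetitions create no new Venn cells. No approximation is needed either, since the LP has a rational optimum.

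Finally, as you acknowledge, the substance is the fractional Helly theorem, which is exactly the main theorem of \cite{Matousek2004}. So your proposal is a reduction to it: adequate at the level of this paper, which imports the whole statement, but not self-contained. One way to supply it is as follows.

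\begin{enumerate}
\item Let $\pi^*(m)$ be the maximal number of non-empty Venn cells of $m$ members, so $\pi^*(m)=O(m^{q-1})$.
\item By Erd\H{o}s--Simonovits supersaturation, the hypergraph of intersecting $q$-tuples of $\mathcal{S}'$ contains $\Omega(M^{qm})$ copies of the complete $q$-partite hypergraph $K^{(q)}(m,\dots,m)$.
\item Choose $m$ with $\pi^*(qm)<m^q$. Then two of the $m^q$ transversal $q$-tuples of a copy have witness points in the same Venn cell of its $qm$ sets, so the copy contains an intersecting $(q+1)$-set. Double counting gives a positive fraction of intersecting $(q+1)$-tuples.
\item Iterate this up to $r$-tuples with $r>2(q-1)$.
\item Take a random sample of bounded size $s$ that is a $\beta$-approximation for the dual system and still has a positive fraction $\alpha_r$ of intersecting $r$-subsets. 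If every point lay in fewer than $\beta M$ members, each of the $O(s^{q-1})$ traces on the sample would have size at most $2\beta s$. That allows at most $O(s^{q-1}(2\beta)^r)\binom{s}{r}$ intersecting $r$-subsets, which is too few for small $\beta$, since $s=O(\beta^{-2}\log(1/\beta))$ (constants depending on $q,\alpha_r$) and $r>2(q-1)$.
\end{enumerate}
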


Assumption (2) is referred to as the $(p,q)$-property. Matou\v{s}ek's result builds on work of Alon and Kleitman \cite{Alon1992} in which the same conclusion is obtained for $X=\mathbb{R}^d$ and $\mathcal{S}$ a set of compact convex subsets of $\mathbb{R}^d$ with the $(p,q)$-property, assuming $p\geq q>d$. The result of Alon and Kleitman solved a long-standing problem of Hadwiger and Debrunner. 

Our interest in Theorem \ref{pq} comes from model theory, where it has had striking applications. It was used by Chernikov and Simon in \cite{Chernikov2015} to prove the existence of uniform honest definitions in NIP theories and, consequently, Laskowski's uniform definability of types over finite sets (UDTFS) conjecture for NIP theories. Then in \cite{Bays2022}, Bays, Kaplan and Simon used it to prove the existence of uniform honest definitions for NIP formulas in arbitrary first-order theories. (A partitioned first-order formula $\varphi(x,y)$ has NIP relative to a first-order theory $T$ if, in any model $M\models T$, the system of sets defined by the formulas $\varphi(x,b)$, as $b$ varies in a sort of $M$, has finite dual VC-dimension. A theory has NIP if all formulas do.)

Theorem \ref{pq} is closely connected to another conjecture in model theory, concerning the notion of dividing. Simon notes in \cite{Simon2014} that this model-theoretic conjecture amounts to a first-order definable version of Theorem \ref{pq}. It became known as the definable $(p,q)$-conjecture and has been proved at various levels of generality, by various authors. Building on this work, we prove the definable $(p,q)$-conjecture for NIP formulas in arbitrary first-order theories (the local definable $(p,q)$-conjecture). The statement is as follows.

\begin{theorem}\label{mt version}
Let $L$ be a language and let $M\prec N$ be $L$-structures, with $N$ sufficiently saturated. Let $\varphi(x,y)$ be a first-order $L$-formula that has NIP with respect to $\text{Th}(M)$. Let $b\in N$ and suppose $\varphi(x,b)$ does not divide over $M$. Then there is a formula $\psi\in \text{tp}(b/M)$ such that $\{\varphi(x,b'):b'\in\psi(M)\}$ is realised in $N$.
\end{theorem}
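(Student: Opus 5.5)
The plan is to reduce to the known situation where $\varphi(x,b)$ extends to a global $M$-invariant (indeed $M$-finitely satisfiable or generically stable) Keisler measure, and then use Theorem~\ref{pq} on finite samples to extract the formula $\psi$. Working locally, I will replace $\text{Th}(M)$ by the $\varphi$-part: all relevant types and measures will be $\varphi$-types and $\varphi$-measures. Since $\varphi$ has NIP, the $\varphi$-types over $N$ are controlled by VC-theory.

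\textbf{Step 1: a measure witnessing non-dividing.} Since $\varphi(x,b)$ does not divide over $M$, and $\varphi$ is NIP, I expect (following the Chernikov--Kaplan style results for NIP formulas, ``forking $=$ dividing over models'' in the local version) to obtain a global $\varphi$-Keisler measure $\mu$ on $x$ that is $M$-invariant and has $\mu(\varphi(x,b))>0$. For local NIP one can use that an $M$-invariant $\varphi$-measure in an NIP context is Borel-definable and, after a finitely satisfiable modification, approximable by averages of types. Concretely, first I take a Morley sequence $(b_i)$ in $\text{tp}(b/M)$ that is $M$-indiscernible with $\{\varphi(x,b_i)\}$ consistent. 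Then NIP of $\varphi$ and a standard averaging argument over coheirs give a measure.

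\textbf{Step 2: finite approximation.} By the VC theorem (uniform $\varepsilon$-approximation for NIP families), there are $k$ and points $a_1,\dots,a_k$ (in $N$, or in $M$ if $\mu$ is finitely satisfiable in $M$) whose average approximates $\mu$ on all instances $\varphi(x,b')$ within $\varepsilon<\mu(\varphi(x,b))/2$. The set of $b'$ such that $\text{Av}(a_1,\dots,a_k)(\varphi(x,b'))>\varepsilon$ is definable by a formula. Using $M$-invariance, I then take $\psi(y)\in\text{tp}(b/M)$ saying that at least a fixed proportion $\delta$ of the $a_i$ satisfy $\varphi(a_i,y)$. Here $\delta$ is chosen from the approximation and is uniform.

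\textbf{Step 3: apply Theorem~\ref{pq}.} The family $\{\varphi(x,b'):b'\in\psi(M)\}$ has the property that every member contains at least $\delta k$ of the $k$ points. By pigeonhole, any $p$ of them with $p$ large relative to $q$ and $\delta$ contain $q$ with a common point. The dual VC-dimension is bounded by NIP. Theorem~\ref{pq} then gives a uniform $n$ such that every finite subfamily is pierced by $n$ points. By compactness and saturation of $N$, the whole family is pierced by $n$ points of $N$. To upgrade piercing to realisation of the whole family, I will take $\psi$ to also make the family into one with the finite intersection property: I replace $\varphi$ by finite conjunctions $\bigwedge_{j<m}\varphi(x,y_j)$, which is still NIP, and repeat the argument. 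Then compactness gives a single realisation in $N$.

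\textbf{Main obstacle.} The hardest step is Step 1 in the purely local setting, since the theory may have IP. I would first try to construct $\mu$ directly from the Morley sequence: take the limit along an ultrafilter of the averages of the $\varphi$-types of realisations of $\{\varphi(x,b_i):i<n\}$. NIP of $\varphi$ guarantees convergence of averages. The other delicate point is ensuring in Step 3 that the conjunction trick yields consistency of the full family rather than merely $n$-piercing. The fallback is to use honest definitions (Bays--Kaplan--Simon) to make $\psi$ define a set whose instances all contain a common approximating average type.
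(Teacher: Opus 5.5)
\textbf{There is a genuine gap, and it sits in Step 2.} That is where all the difficulty of the theorem lies, and the step does not go through.

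Your approximating points $a_1,\dots,a_k$ cannot be taken in $M$. Indeed, if $\mu$ were finitely satisfiable in $M$ with $\mu(\varphi(x,b))>0$, some $a\in M$ would satisfy $\varphi(a,b)$, and $\psi(y):=\varphi(a,y)$ would finish the proof trivially. The theorem is about formulas with no point in $M$, such as $x>b$ with $b>M$ in a dense order.

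So ``at least $\delta k$ of the $a_i$ satisfy $\varphi(a_i,y)$'' is a formula over $M\bar a$, not a member of $\text{tp}(b/M)$. $M$-invariance of $\mu$ only says that $y\mapsto\mu(\varphi(x,y))$ is constant on realisations of $\text{tp}(b/M)$. You need an $M$-formula $\psi$ on all of whose solutions this quantity stays above $\varepsilon$. This includes solutions in $M$, none of which realise $\text{tp}(b/M)$ when $b\notin M$. Getting that requires continuity at $\text{tp}(b/M)$, i.e.\ definability of $\mu$ over $M$, which invariant measures need not have.

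To see that nothing has been reduced, take $\mu$ to be an $M$-invariant type $p$. Step 2 then asks for $\psi\in\text{tp}(b/M)$ with $\varphi(x,b')\in p$ for all $b'\in\psi(N)$, and that already implies the theorem. (Step 1 is also only asserted: with only $\varphi$ NIP in an arbitrary theory, the existence of such a $\mu$ would need proof. This is secondary.)

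\textbf{Step 3 has a second gap.} Even granting $\psi$, Theorem~\ref{pq} only gives piercing by $n$ points of $U$. That splits $\psi(N)$ into pieces $\{b':\varphi(a_i,b')\}$ which are not $M$-definable. Moreover, even for a definable measure, positive measure of each instance does not give the finite intersection property. The conjunction trick gives, for each $m$, a different formula in $m$ variables with a different piercing number. Nothing produces a single $\psi(y)$ with finitely consistent instances.

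\textbf{The missing idea is the paper's $M$-definable forcing relation.} The steps are as follows.
\begin{enumerate}
\item Take a strictly invariant extension $s$ of $\text{tp}(b/M)$ that is finitely satisfiable in $M$ (Proposition~\ref{strictinv}), and a Morley sequence $(b_n)$ for it.
\item Maximise the alternations of $\varphi(a,b'_n)$ along the reversed sequence; this yields integers $k,m$.
\item Finite satisfiability of $s$ lets you replace $b_{k+2},\dots,b_{k+m+1}$ by parameters in $M$. This gives an $M$-formula $\theta(x,z)$ with $\theta(x,b_1,\dots,b_k)$ consistent and $\theta(x,b_1,\dots,b_k)\vdash\varphi(x,b_{k+1})$.
\item By Proposition~\ref{chardividing}, $\theta(x,b_1,\dots,b_k)$ does not divide. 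Facts~\ref{fact 1} and~\ref{fact 2} with Theorem~\ref{infinite pq} then give $\tau\in\text{tp}(b_1,\dots,b_k/M)$ and points $a_1,\dots,a_{n'}$ piercing $\{\theta(x,c):c\in\tau(N)\}$.
\item Strictness lets any finite set of realisations of $\text{tp}(b/M)$ be moved by an automorphism over $M$ into $\text{tp}(b_{k+1}/Mb_1\dots b_k)$. Hence the $M$-formula $\exists z\,(\tau(z)\wedge\forall x\,(\theta(x,z)\to\bigwedge_{i\le n'}\varphi(x,y_i)))$ holds of every $n'$-tuple of realisations. By compactness it holds of every $n'$-tuple from $\psi(N)$, for some $\psi\in\text{tp}(b/M)$.
\item Suppose each $a_i$ failed some $b''_i\in\psi(N)$. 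Take the witness $c$ for $(b''_1,\dots,b''_{n'})$. Some $a_j$ satisfies $\theta(x,c)$, hence $\varphi(x,b''_j)$, a contradiction. So a single $a_i$ realises the whole family.
\end{enumerate}
Piercing is applied to the auxiliary $\theta$-family. The $M$-definable implication $\theta\to\varphi$ is what turns that into both a formula over $M$ and a single common point, replacing your Step 2 and your conjunction trick.
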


To understand the connection between Theorem \ref{mt version} and Theorem \ref{pq}, we should state a version of the latter for infinite set systems. Simply allowing $\mathcal{S}$ to be infinite in Theorem \ref{pq} does not work. For example, let $X=\mathbb{Q}$ and let $\mathcal{S}$ be the set of all intervals $(a,b)\subseteq\mathbb{Q}$ such that $a<\sqrt{2}<b$. Then $\text{VC}^*(X,\mathcal{S})=2$ and every three elements of $\mathcal{S}$ have non-empty intersection, so $(X,\mathcal{S})$ has the $(3,3)$-property. However, the conclusion of Theorem \ref{pq} is false. Using the compactness theorem of first-order logic, it is easy to see that the following is equivalent to Theorem \ref{pq}. 

\begin{theorem}\label{infinite pq}
Let $X$ be a non-empty set and let $\mathcal{S}$ be a non-empty (not necessarily finite) set of non-empty subsets of $X$. Let $q\leq p$ be positive integers. Suppose $\text{VC}^*(X,\mathcal{S})<q$ and $(X,\mathcal{S})$ has the $(p,q)$-property. There exist an integer $n$, depending only on $(p,q)$, and subsets $\mathcal{S}_1,...,\mathcal{S}_n\subseteq\mathcal{S}$ such that $\mathcal{S}=\mathcal{S}_1\cup...\cup\mathcal{S}_n$ and, for each $j\in\{1,...,n\}$, every non-empty finite subset of $\mathcal{S}_j$ has non-empty intersection. 
\end{theorem}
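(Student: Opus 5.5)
The plan is to deduce Theorem \ref{infinite pq} from Theorem \ref{pq} by compactness, taking for $n$ the integer supplied by Theorem \ref{pq} for the given $(p,q)$. The core idea is to reformulate the conclusion as a colouring problem. Consider $n$ colours and assign to each $Y\in\mathcal{S}$ a colour $c(Y)\in\{1,\dots,n\}$, with $\mathcal{S}_j=c^{-1}(j)$. The required condition is a conjunction of constraints, each involving only finitely many elements of $\mathcal{S}$: for every finite $\mathcal{F}\subseteq\mathcal{S}$ whose intersection $\bigcap\mathcal{F}$ is empty, $\mathcal{F}$ must not be monochromatic. So by compactness (propositional compactness, or equivalently Tychonoff for the product space $\{1,\dots,n\}^{\mathcal{S}}$), it suffices to show that every finite $\mathcal{S}_0\subseteq\mathcal{S}$ admits a colouring $c_0:\mathcal{S}_0\to\{1,\dots,n\}$ meeting all constraints that involve only members of $\mathcal{S}_0$.

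For the finite case, fix a non-empty finite $\mathcal{S}_0\subseteq\mathcal{S}$. First check that $(X,\mathcal{S}_0)$ satisfies the hypotheses of Theorem \ref{pq}.
\begin{enumerate}
\item The dual VC-dimension can only drop when passing to a subfamily, so $\text{VC}^*(X,\mathcal{S}_0)<q$.
\item The $(p,q)$-property passes to subfamilies, at least when $|\mathcal{S}_0|\geq p$.
\item If $|\mathcal{S}_0|<p$, there is a small issue: either interpret the $(p,q)$-property as concerning $p$ elements counted with repetition, or simply observe that we may colour each member of $\mathcal{S}_0$ differently, provided $n\geq p$. Enlarging $n$ if necessary handles this case.
\end{enumerate}
Theorem \ref{pq} then gives points $a_1,\dots,a_n\in X$ that pierce $\mathcal{S}_0$. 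Set $c_0(Y)=\min\{i: a_i\in Y\}$. Every member of the colour class $c_0^{-1}(j)$ contains $a_j$, so every finite subfamily of that class has non-empty intersection. This gives a valid colouring of $\mathcal{S}_0$.

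For the compactness step, for each finite $\mathcal{S}_0$ let $K_{\mathcal{S}_0}\subseteq\{1,\dots,n\}^{\mathcal{S}}$ be the set of total colourings whose restriction to $\mathcal{S}_0$ is valid. Each $K_{\mathcal{S}_0}$ is closed, indeed clopen, since it depends only on finitely many coordinates. Each is non-empty, because a valid colouring of $\mathcal{S}_0$ can be extended arbitrarily. The family has the finite intersection property, since $K_{\mathcal{S}_0\cup\mathcal{S}_1}\subseteq K_{\mathcal{S}_0}\cap K_{\mathcal{S}_1}$. By compactness of the product there is $c$ in all the $K_{\mathcal{S}_0}$. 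Then every finite $\mathcal{F}\subseteq c^{-1}(j)$ lies in $K_{\mathcal{F}}$, so it has non-empty intersection. Finally, discard the empty classes; this only requires that $n$ is an upper bound on the number of pieces, which is harmless.

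The argument is routine, and the only delicate points are bookkeeping. One is the edge case where the finite subfamily has fewer than $p$ members. The other is checking that ``every finite subfamily has non-empty intersection'' is a closed condition on colourings, which holds because each instance of it refers to only finitely many coordinates.
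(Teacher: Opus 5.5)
Your argument is correct and is essentially the compactness deduction from Theorem \ref{pq} that the paper has in mind (it defers to the proof of Proposition 2.5 in \cite{Simon2015}), and you handle the edge case $|\mathcal{S}_0|<p$ correctly by taking $n\geq p$. The only difference is packaging. The first-order route the paper refers to realises $n$ piercing points $a_1,\dots,a_n$ in an elementary extension of $(X,\mathcal{S},\in)$, takes $\mathcal{S}_j=\{Y\in\mathcal{S}: a_j\in Y\}$, and pulls non-empty intersections back to $X$ by elementarity; this additionally shows the classes can be taken externally definable. Your Tychonoff argument over colourings in $\{1,\dots,n\}^{\mathcal{S}}$ avoids model theory but yields no such structure on the $\mathcal{S}_j$.
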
 

The equivalence between Theorems \ref{pq} and \ref{infinite pq} is not a new observation. The reasoning is given in the proof of Proposition 2.5 in \cite{Simon2015}. Theorem \ref{infinite pq} invites the question of what structural properties the sets $\mathcal{S}_1,...,\mathcal{S}_n$ might have. As is shown in \cite{Simon2014} and \cite{Simon2015}, using ideas from \cite{Chernikov2015}, Theorem \ref{mt version} is equivalent to the assertion that $\mathcal{S}_1,...,\mathcal{S}_n$ are first-order definable (with parameters) in the structure $(X,\mathcal{S},\in)$, this being a two-sorted structure with just one relation symbol that is interpreted as the ``element of" relation between members of $X$ and members of $\mathcal{S}$. Previously known cases of Theorem \ref{mt version} provide the same conclusion under additional assumptions on $(X,\mathcal{S})$. The case where $\text{Th}(M)$ has NIP, proved by Kaplan in \cite{Kaplan2024}, applies if every definable family of definable sets in $(X,\mathcal{S},\in)$ has finite dual VC-dimension. 

In the following result we obtain some uniformity which is not immediately implied by Theorem \ref{mt version}. For this we take inspiration from Theorem 4.1 in \cite{Kaplan2024} and its proof.

\begin{theorem}\label{definable pq}
Let $q\leq p$ be positive integers. There exist an integer $n$ and first-order formulas $\psi_1(y,w),...,\psi_n(y,w)$ such that, for any non-empty set $X$ and any non-empty set $\mathcal{S}$ of non-empty subsets of $X$, if $\text{VC}^*(X,\mathcal{S})<q$ and $(X,\mathcal{S})$ has the $(p,q)$-property then there exist tuples $d_1,...,d_n$ from the structure $(X,\mathcal{S})$ such that the following is satisfied. Each element of $\mathcal{S}$ realises $\psi_1(y,d_1)\vee...\vee\psi_n(y,d_n)$ and, for each $j\in\{1,...,n\}$, if $\mathcal{S}_j$ is the subset of $\mathcal{S}$ defined in $(X,\mathcal{S})$ by $\psi_j(y,d_j)$ then every non-empty finite subset of $\mathcal{S}_j$ has non-empty intersection.
\end{theorem}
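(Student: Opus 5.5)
The plan is to derive Theorem \ref{definable pq} from Theorem \ref{mt version} by a compactness argument, in the style of the proof of Theorem 4.1 in \cite{Kaplan2024}. Work in the two-sorted language $L=\{\in\}$ and let $\varphi(x,y)$ be $x\in y$. For fixed $q\leq p$, let $T_{p,q}$ be the $L$-theory saying: extensionality for the $\mathcal{S}$-sort, each $y$ is non-empty, the dual VC-dimension of $\varphi$ is $<q$ (one first-order sentence forbidding $q$ elements with all $2^q$ Venn fields non-empty), and the $(p,q)$-property (one sentence). Every model of $T_{p,q}$ has $\varphi$ NIP. Every set system as in the statement is a model of $T_{p,q}$, and conversely models of $T_{p,q}$ are, up to isomorphism, such set systems. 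So it suffices to find $n$ and formulas $\psi_1(y,w),\dots,\psi_n(y,w)$ that work uniformly in all models of $T_{p,q}$.

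First, for a single model $M\models T_{p,q}$, I would show that $\mathcal{S}^M$ is covered by finitely many $M$-definable sets $\psi(M)$, each with $\{\varphi(x,b):b\in\psi(M)\}$ finitely satisfiable in $M$. Take $N\succ M$ sufficiently saturated. Let $b\in N$ be in the $\mathcal{S}$-sort. I claim $\varphi(x,b)$ does not divide over $M$. Suppose $(b_i)_{i<\omega}$ is $M$-indiscernible with $b_0=b$. By the $(p,q)$-property, some $q$ of $b_0,\dots,b_{p-1}$ have a common point, so by indiscernibility $\{\varphi(x,b_i):i<\omega\}$ is $q$-consistent. To upgrade this to full consistency I would use a standard fact: for an NIP formula with dual VC-dimension $<q$ and an indiscernible sequence, $q$-consistency implies consistency. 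If this fails, then by alternation and Ramsey one can extract $q$ instances shattered in the Venn sense, which gives a contradiction. This step needs care. A cleaner alternative is to apply Theorem \ref{infinite pq} to $\{\varphi(N,b_i)\}$: one of finitely many pieces contains infinitely many $b_i$, and indiscernibility then gives consistency. Having shown non-dividing, Theorem \ref{mt version} gives $\psi_b\in\mathrm{tp}(b/M)$ with $\{\varphi(x,b'):b'\in\psi_b(M)\}$ realised in $N$. Hence every finite subfamily is consistent in $N$, and so in $M$ by elementarity. Each type over $M$ in the $\mathcal{S}$-sort contains such a formula, so by compactness of the type space finitely many $\psi_{b_1},\dots,\psi_{b_k}$ cover the $\mathcal{S}$-sort of $M$.

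Second comes uniformity, which I expect to be the main obstacle. The properties ``the $\psi_j(y,d_j)$ cover the $\mathcal{S}$-sort'' and ``every $m$-subfamily of $\psi_j(y,d_j)$ has a common point'' are first-order for each fixed $m$. The full finite-intersection property, however, is an infinite conjunction, so a naive compactness argument over all $m$ fails. I would handle this as in \cite{Kaplan2024}. For each candidate tuple $\bar\psi=(\psi_1(y,w),\dots,\psi_n(y,w))$, consider the theory $T_{p,q}$ together with the axioms ``for all $\bar d$, either the $\psi_j(y,d_j)$ do not cover, or some $\psi_j(y,d_j)$ has an inconsistent finite subfamily''. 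The latter is not first-order. The fix is to note that the inconsistency size $m$ can be bounded uniformly: by $\mathrm{VC}^*<q$ and the usual Helly-type bound for NIP families (via $\varepsilon$-nets or the $(p,q)$-theorem itself), a uniformly definable family that is $m_0$-consistent for a suitable $m_0$ depending on $q$ and $\psi_j$ can be refined into boundedly many consistent pieces. Concretely, Theorem \ref{infinite pq} applied to $\{\varphi(X,b):b\in\psi_j(\mathcal{S},d_j)\}$ with the $(m_0,m_0)$-property gives boundedly many consistent subfamilies. That still leaves the task of making those definable, so the first-order substitute I would actually aim for is ``every $m_0$-subfamily is consistent'', followed by iterating the first step.

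With that substitute in hand, the argument finishes as follows. If no finite list of formula-tuples worked, then for each finite list there would be a counterexample model. The set of formulas is countable, so compactness would yield a model of $T_{p,q}$ in which no tuple $\bar\psi$ works for any parameters $\bar d$. This contradicts the first step. A single disjunction over finitely many candidate tuples then combines into one tuple $\psi_1,\dots,\psi_n$ by coding the case distinction into the parameters $w$.
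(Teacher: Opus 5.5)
Your first step is fine. Non-dividing of $\varphi(x,b)$ follows from Fact~\ref{fact 2}, or from your $q$-consistency argument. Theorem~\ref{mt version} together with compactness of the type space then gives, in each single model, a finite definable cover by pieces with the finite intersection property.

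The gap is in the uniformity step, and the fix you propose does not work.

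\textbf{There is no Helly-type bound.} Dual VC-dimension $<q$ plus $m_0$-consistency does not imply consistency, for any $m_0$. For example, if $|X|=m+1$, the family $\{X\setminus\{x\}:x\in X\}$ has dual VC-dimension $1$, and every $m$ of its members meet, but all $m+1$ do not. So the substitute ``every $m_0$-subfamily is consistent'' drops exactly what must be proved. ``Iterating the first step'' puts you back in the same non-uniform situation, so the argument is circular.

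\textbf{Your closing compactness argument cannot be run.} The condition ``no $\bar\psi$ works for any $\bar d$'' is not first-order, because failure of the finite intersection property is an infinite disjunction over the size $m$. Concretely, let $N$ be an ultraproduct of counterexamples $N_i$. Step~1 gives $\psi_b(y,d)$ that cover $\mathcal{S}$ and have the finite intersection property in $N$. \L o\'s's lemma transfers the covering to $\mathcal{U}$-many $N_i$, but not the finite intersection property, which may fail in $N_i$ at sizes $m_i\to\infty$.

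\textbf{The missing idea} is a first-order sufficient condition for the finite intersection property. The paper extracts it from the \emph{proof} of Theorem~\ref{mt version}, not from its statement. For each $b$ there are formulas $\theta_b(x,z)$, $\tau_b(z)$ and integers $p'_b\geq q'_b$ and $n'_b$ with the following properties:
\begin{enumerate}
\item the sets $\theta_b(x,c)$, $c\in\tau_b$, are non-empty;
\item this family has dual VC-dimension $<q'_b$ and the $(p'_b,q'_b)$-property;
\item for any $n'_b$ elements $b''_1,\dots,b''_{n'_b}$ of $\psi_b$, there is a single $c\in\tau_b$ such that $\theta_b(x,c)$ implies $\varphi(x,b''_1)\wedge\dots\wedge\varphi(x,b''_{n'_b})$.
\end{enumerate}
All of these are first-order, so they do transfer to $\mathcal{U}$-many $N_i$, with suitable parameters $d_b\in N_i$. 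In each such $N_i$, Theorem~\ref{pq} (via an elementary extension) gives $n'_b$ points piercing every $\theta_b(x,c)$ with $c\in\tau_b$. Item (3) then forces one of these points to lie in every $\varphi(x,b'')$ with $b''\in\psi_b(N_i,d_b)$. This yields the finite intersection property in infinitely many factors, which is the required contradiction. Without such a first-order witness your argument does not get past Step~2.
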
 

For finite set systems, the elements $a_1,...,a_n\in X$ in Theorem \ref{pq} can be used as parameters to define suitable $\mathcal{S}_1,...,\mathcal{S}_n$ in an obvious way (taking $\mathcal{S}_j$ to be the subset of $\mathcal{S}$ defined by the formula $a_j\in y$). However, when $\mathcal{S}$ is infinite, Theorem \ref{definable pq} says something that is not obvious. 

We now consider the history of the definable $(p,q)$-conjecture. In their Problem 29 in \cite{Chernikov2015}, Chernikov and Simon asked if the conclusion of Theorem \ref{mt version} holds when $\text{Th}(M)$ has NIP and $\varphi(x,b)$ does not fork over $M$. It was established by Chernikov and Kaplan in \cite{chernikov2012} that $\varphi(x,b)$ forks over $M$ if and only if it divides over $M$, when $\text{Th}(M)$ has NIP (or even just NTP$_2$). So ``does not fork" can be replaced by ``does not divide". 

Several authors proved special cases of a positive answer to Chernikov and Simon's question, sometimes getting stronger conclusions in those cases. In \cite{Simon2014}, Simon obtained the version where $\text{Th}(M)$ is dp-minimal and has small or medium directionality. He thanked Sergei Starchenko in that paper for motivating him to work on the conjecture. In \cite{Simon2014a}, Simon and Starchenko obtained a stronger version (in which the conclusion is that $\varphi(x,b)$ belongs to a type that is definable over $M$) when $\text{Th}(M)$ belongs to a certain class of dp-minimal theories, including all those with definable Skolem functions. In \cite{Simon2015}, Simon proved the definable $(p,q)$-conjecture when $\text{Th}(M)$ has NIP and small or medium directionality. In \cite{Boxall2018}, Boxall and Kestner proved it when $\text{Th}(M)$ is distal (a special case of NIP). In his PhD thesis \cite{Rakotonarivo2021}, Rakotonarivo proved it for dense pairs of geometric distal structures, under the assumption that algebraic closure equals definable closure in the geometric structures. In \cite{Guerrero2024}, And\'{u}jar Guerrero proved the definable $(\omega,2)$-conjecture assuming that $\varphi(x,y)$ has VC-codensity at most 2. While this is quite a strong assumption on $\varphi(x,y)$, no assumption was made about $\text{Th}(M)$. 

In \cite{Kaplan2024}, Itay Kaplan gave a full positive answer to Chernikov and Simon's question, obtaining the case of Theorem \ref{mt version} in which $\text{Th}(M)$ has NIP. His proof adapts the argument in \cite{Boxall2018} and also uses knowledge of compressibility of types in NIP theories and, crucially, a better kind of strict Morley sequence than had been used in \cite{Boxall2018}. In turn, our proof of Theorem \ref{mt version} takes Kaplan's argument further with the aid of a slightly different notion of strict Morley sequence. Theorem \ref{mt version} was stated as a conjecture in \cite{Simon2015}.

Our strict Morley sequences over a model always exist (with no assumptions on the theory) and behave well with regard to NIP formulas. Along the way, we show that an NIP formula divides over a model in an arbitrary theory if and only if it quasi-divides over that model. This may be of independent interest. 

Our proof of Theorem \ref{mt version} can be adapted to establish a family of results parameterised by certain subsets of a monster model. We obtain the following. 

\begin{theorem}\label{refined version}
Let $L$ be a language and let $M\prec N\prec U$ be $L$-structures, with $N$ and $U$ sufficiently saturated. Let $A$ be the set of realisations in $U$ of a deductively closed partial type $t(x)$ over $N$ that is definable over $M$ in the strong sense that, for every $\varphi(x,y)$, $\{b\in N:\varphi(x,b)\in t(x)\}$ is definable over $M$. Let $\varphi(x,y)$ be a first-order $L$-formula that has NIP with respect to $\text{Th}(M)$. Let $r(y)$ be a complete type over $M$. If $\{\varphi(x,b):b\in r(N)\}$ is realised in $A$ then there is an $L$-formula $\psi(y)$ over $M$ such that $\{\varphi(x,b):b\in \psi(N)\}$ is realised in $A$. 
\end{theorem}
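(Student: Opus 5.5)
Theorem \ref{refined version} is a generalisation of Theorem \ref{mt version}. Taking $A=U$ (the partial type $t(x)$ consisting of $x=x$, trivially definable over $M$), non-dividing of $\varphi(x,b)$ is exactly the hypothesis that $\{\varphi(x,b'):b'\in r(N)\}$ is consistent. So I would follow Kaplan's strategy for the NIP case, relativised to $A$. I argue by contradiction. Suppose that for every $\psi\in r$ the family $\{\varphi(x,b):b\in\psi(N)\}$ is not realised in $A$. Since $A$ is the set of realisations of $t$, compactness and the saturation of $U$ then give, for each $\psi\in r$, a finite set $b_1,\dots,b_k\in\psi(N)$ and a formula $\theta\in t$ such that $\theta(x)\wedge\bigwedge_i\varphi(x,b_i)$ is inconsistent. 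The definability of $t$ over $M$ is what lets me move this configuration around. For each $\theta(x,c)\in t$ the condition ``$\exists x\,(\theta(x,c)\wedge\bigwedge_i\varphi(x,y_i))$'' is not itself controlled by $t$. However, the set of tuples $(y_1,\dots,y_k)$ such that $\bigwedge_i\varphi(x,y_i)\wedge\theta'(x)$ lies in $t$ for suitable $\theta'$ is $M$-definable, since $t$ is deductively closed and definable over $M$. This gives an $M$-definable notion of ``$\bigwedge_i\varphi(x,y_i)$ is $t$-consistent''.

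The key steps, in order, are as follows.
\begin{enumerate}
\item I build a strict Morley sequence $(b_i)_{i<\omega}$ in $r$ over $M$, of the kind the paper develops, which exists in any theory and behaves well for NIP formulas.
\item I show that $\{\varphi(x,b_i):i<\omega\}$ is realised in $A$. Since $r(N)$ contains all $b_i$ (we may take them in $N$), this is immediate from the hypothesis.
\item Using the failure for every $\psi\in r$, together with the $M$-definability of $t$, I extract, by Kaplan's compression argument, a uniform finite bound: for each $\psi\in r$ there are $b'_1,\dots,b'_k\in\psi(N)$ with $\{\varphi(x,b'_j)\}$ not $t$-consistent, with $k$ depending only on $\varphi$ via the VC-dimension and the $(p,q)$-bound of Theorem \ref{pq}.
\item Using the Morley-sequence properties (an indiscernible array plus the honest-definition and compressibility facts for the NIP formula $\varphi$), I place such bad tuples in positions compatible with the sequence. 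This yields a contradiction with step 2: $t$-inconsistency is $M$-type-invariant because $t$ is $M$-definable, so it transfers along automorphisms over $M$.
\end{enumerate}

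Concretely, for step 3 I would apply Theorem \ref{definable pq} to the set system whose points are $A$ and whose sets are $\varphi(A,b)$ for $b\in r(N)$, or a suitable $\bigvee$-closure of such sets. This system has bounded dual VC-dimension since $\varphi$ is NIP. The $(p,q)$-property would come from the strict Morley sequence, since any $p$ parameters realising a fixed type can be compared with an indiscernible sequence, on which $t$-consistency holds. The theorem then gives finitely many uniformly definable subfamilies covering the system, each with the finite intersection property relative to $A$. Since $r$ is complete, a definable cover of $r(N)$ by finitely many pieces has one piece containing $r(N)$ modulo a formula $\psi\in r$ after intersecting with $M$-definable sets. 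The parameters $d_j$ of the cover must, however, be pushed down to $M$.

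The main obstacle is exactly this descent of parameters to $M$. Theorem \ref{definable pq} produces the $d_j$ in $N$ (or in $U$), not in $M$. Replacing them by $M$-parameters requires the strict Morley sequence analysis: one shows the $\psi_j(y,d_j)$ can be chosen so that $d_j$ realises a type over $M$ finitely satisfiable or definable in a way that lets one take an $M$-definable approximation, as in Kaplan's Theorem 4.1. I would first try to choose the $d_j$ from an $M$-indiscernible sequence built from the $b_i$, use NIP to make the truth value of $\psi_j(b,d_j)$ eventually constant, and use the $M$-definability of $t$ to replace ``realised in $A$'' by an $M$-invariant condition, so that an honest definition over $M$ can be extracted.
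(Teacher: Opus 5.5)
There is a genuine gap: the steps that carry the content of the theorem are asserted, not proved. Under your standing assumption (for every $\psi\in r$ the family $\{\varphi(x,b):b\in\psi(N)\}$ has no common point in $A$), the uniform $k$ of your Step 3 already finishes the proof. By Lemma \ref{A lemma}, the condition ``$\bigwedge_{j\le k}\varphi(x,y_j)$ is not realised in $A$'' is defined on $N$ by an $M$-formula $\chi(y_1,\dots,y_k)$. So $\{\chi\}\cup r(y_1)\cup\dots\cup r(y_k)$ is finitely satisfiable in $N$, hence realised in $N$, which contradicts the hypothesis. (This is also the correct form of the definability you want: $\bigwedge_j\varphi(x,y_j)$ is realised in $A$ iff $\neg\bigwedge_j\varphi(x,y_j)\notin t$. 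Membership of $\bigwedge_j\varphi(x,y_j)\wedge\theta'(x)$ in $t$ is a different condition.) So Step 3 is the theorem itself, not a step towards it, and nothing you cite gives it. Matou\v{s}ek's theorem bounds a piercing number, not the size of a minimal subfamily with no common point in $A$. Bounded dual VC-dimension gives no such Helly-type bound: the complements of the points of an $n$-element set have dual VC-dimension $1$, yet only all $n$ of them together have empty intersection. Step 4 only names tools without saying how they are used. Your ``concrete'' route does not help either. The family $\{\varphi(A,b):b\in r(N)\}$ has a common point in $A$ by hypothesis, so Theorem \ref{definable pq} is vacuous there. On a definable family $\psi(N)$, which would first need a $(p,q)$-property, the pieces it produces are defined in the structure $(A,\mathcal{S},\in)$ with parameters outside $M$, so $r(N)$ may be split among them. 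You flag this descent of parameters as the main obstacle, but your remedies are only sketched.

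The missing idea is to apply the $(p,q)$-theorem not to the $\varphi$-family but to an auxiliary family of honest-definition type, built so that no descent is needed. Take a Morley sequence $(b_n)$ of a strictly invariant coheir $s$ of $r$ (Proposition \ref{strictinv}). Choose $a\in A$ maximising the number of alternations of $\varphi(a,\cdot)$ along the reversed sequence. Using compactness, Lemma \ref{A lemma} and finite satisfiability of $s$ in $M$, the paper then finds $k$ and parameters $d_{k+2},\dots,d_{k+m+1}$ \emph{in $M$} with the following property. The formula $\theta(x,b_1,\dots,b_k)$, i.e.\ $\varphi(x,b_1)\wedge\dots\wedge\varphi(x,b_k)$ together with the alternation pattern on the $d$'s, is realised in $A$, and every $a\in A$ satisfying it satisfies $\varphi(a,b_{k+1})$. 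Propositions \ref{refined equivalence}, \ref{fact 3} and \ref{fact 4} give $\tau\in\text{tp}(b_1,\dots,b_k/M)$ such that $\{\theta(U,c)\cap A:c\in\tau(N)\}$ has a $(p',q')$-property. Theorem \ref{infinite pq} with saturation then gives $a_1,\dots,a_{n'}\in A$ piercing this family. Strictness of $s$ lets any finite subset of $r(N)$ be moved over $M$ into realisations of $\text{tp}(b_{k+1}/Mb_1\dots b_k)$. Compactness then gives $\psi\in r$ such that any $n'$ elements of $\psi(N)$ are all forced, on $A$, by a single $\theta(x,c)$ with $c\in\tau(N)$. Pigeonhole now shows that some $a_i$ realises $\{\varphi(x,b):b\in\psi(N)\}$. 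The only parameters outside $M$ are the $a_i$, which are exactly the realisations sought.
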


This collapses to Theorem \ref{mt version} when $A=U^{|x|}$, given that dividing equals quasi-dividing in this setting. The motivation for generalising Theorem \ref{mt version} in this way originated in the PhD work of Tsinjo Rakotonarivo. To prove the definable $(p,q)$-conjecture for certain dense pairs of distal geometric structures, it was useful to have a refined version of the result for the underlying geometric structures. Roughly speaking, the refinement replaced the notion of not being empty with the notion of having dimension at least $d$, for some positive integer $d$. As a demonstration of the utility of the ideas behind Theorem \ref{refined version}, we include a version of Rakotonarivo's proof for dense pairs of geometric structures. For this, $A$ is taken to be a certain set of $n$-tuples with dimension at least $d$ over $N$. 

The paper is structured as follows. In Section 2 we recall some preliminary material. In Section 3 we establish the existence of what we call strictly invariant extensions of complete types over models, in arbitrary theories, and prove that their Morley sequences behave well with respect to NIP formulas. In Section 4 we prove Theorems \ref{mt version}, \ref{definable pq} and \ref{refined version}. In Section 5 we give the application of Theorem \ref{refined version} to dense pairs. 

We are grateful to Ita\"{i} Ben Yaacov for a conversation that helped us arrive at the appropriate conditions for the set $A$ in Theorem \ref{refined version}. 

\section{Notation and preliminaries}

We recall some basic definitions and facts about set systems, NIP and the model-theoretic notion of dividing. A helpful book that covers these and much more is \cite{Simon2015guide}.

Given a non-empty set $X$, a non-empty set $\mathcal{S}$ of subsets of $X$ and a subset $A\subseteq X$, we say $(X,\mathcal{S})$ shatters $A$ if $\{A\cap Y:Y\in\mathcal{S}\}=\{B:B\subseteq A\}$. The VC-dimension of $(X,\mathcal{S})$ is defined to be $$\text{VC}(X,\mathcal{S})=\max\{\left|A\right|:(X,\mathcal{S})\text{ shatters }A\}$$ if this maximum exists. Otherwise, $\text{VC}(X,\mathcal{S})=\infty$. 

The dual of $(X,\mathcal{S})$ is $(\mathcal{S},\{\mathcal{S}_x:x\in X\})$ where, for each $x\in X$, $\mathcal{S}_x=\{Y\in\mathcal{S}:x\in Y\}$. The dual VC-dimension of $(X,\mathcal{S})$ is defined to be $$\text{VC}^*(X,\mathcal{S})=\text{VC}(\mathcal{S},\{\mathcal{S}_x:x\in X\}).$$ It is known that $\text{VC}(X,\mathcal{S})$ is finite if and only if $\text{VC}^*(X,\mathcal{S})$ is finite.

Throughout this paper, $L$ is a (possibly many-sorted) language and $T$ is a complete first-order $L$-theory. In general, no further assumptions are made about $T$.  For $L$-structures $M$ and $N$, we use the notation $M\prec^* N$ to mean that $N$ is a $\kappa$-saturated and strongly $\kappa$-homogeneous elementary extension of $M$, for some sufficiently large $\kappa$. If $x$ is a tuple of variables then $M^{|x|}$ is the cartesian power of sorts of $M$ determined by the sorts of the variables. We will not always indicate which cartesian power of sorts is being used, and so could simply write $a\in M$ instead of $a\in M^{|x|}$. 

Given a first-order $L$-formula $\varphi$ and disjoint tuples of variables $x$ and $y$, such that every free variable of $\varphi$ appears in $x$ or $y$, the partitioned formula $\varphi(x,y)$ is said to have NIP (with respect to $T$) if, for every model $M\models T$, the set system $(M^{\left|x\right|},\{\varphi(M,b):b\in M^{\left|y\right|}\})$ has finite VC-dimension, and so also finite dual VC-dimension. The theory $T$ is said to have NIP if every partitioned first-order $L$-formula has NIP with respect to $T$.

Let $M\prec^*N\models T$. Let $\varphi(x,y)$ be an $L$-formula and $b\in N^{\left|y\right|}$. We say $\varphi(x,b)$ divides over $M$ if there is a sequence $(b_n)_{n\in\mathbb{N}}$ of realisations of $\text{tp}(b/M)$ in $N$ that is indiscernible over $M$ and such that $\{\varphi(x,b_n):n\in\mathbb{N}\}$ is inconsistent. We say $\varphi(x,b)$ quasi-divides over $M$ if $\{\varphi(x,b'):b'\models\text{tp}(b/M)\}$ is not finitely consistent.

\section{Strictly invariant extensions and strict Morley sequences}

The concept of a strictly invariant extension played an important role in \cite{chernikov2012}, where the existence of such extensions was established, under certain conditions. Here we define a slightly modified version of the notion of invariant extension, and we consider only the case of types over a model. 

\begin{definition}\label{strict invariance}
Let $M\prec^* N\models T$ and let $r$ be a complete type over $M$. A strictly invariant extension of $r$ to $N$ is a complete type $s$ over $N$ such that

\begin{enumerate}
\item $r\subseteq s$,
\item $s$ is invariant over $M$ and
\item for every type $t(y_1,...,y_n)$ over $M$, if $t(y_1,...,y_n)\cup r(y_1)\cup ... \cup r(y_n)$ is consistent then so is $t(y_1,...,y_n)\cup s(y_1)\cup ... \cup s(y_n)$.
\end{enumerate}
\end{definition}

To compare this with Definition 3.8 in \cite{chernikov2012}, we remark that item (3) of our Definition \ref{strict invariance} is equivalent to saying that if $b$ realises $s$ then, for any $\psi(y,c)\in s$, where the formula $\psi(y,z)$ is over $M$, $\psi(b,z)$ does not quasi-divide over $M$. Since forking and quasi-dividing are equivalent over models in NTP$_2$ theories, as is shown in \cite{chernikov2012}, the two notions of strictly invariant extension agree, for global extensions of types over models, in the NTP$_2$ setting.

We show that strictly invariant extensions of types over models (as in Definition \ref{strict invariance}) always exist and that, moreover, we can obtain one that is a coheir. This is in the spirit of Proposition 3.7 in \cite{chernikov2012}, but it is not covered by that result and the proof is different. 

\begin{proposition}\label{strictinv}
Let $M\prec^* N\models T$ and let $r$ be a complete type over $M$. There exists a strictly invariant extension $s$ of $r$ to $N$ such that $s$ is finitely realised in $M$. 
\end{proposition}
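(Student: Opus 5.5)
The plan is to build $s$ by compactness, as the type over $N$ avoiding an ideal of "bad" formulas. Recall that any type over $N$ that is finitely satisfiable in $M$ is automatically invariant over $M$, so item (2) of Definition \ref{strict invariance} comes for free once $s$ is a coheir. The work is to arrange items (1) and (3) together with finite satisfiability.

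First I will reformulate item (3) syntactically. Suppose $t(y_1,\dots,y_n)\cup s(y_1)\cup\dots\cup s(y_n)$ is inconsistent. By compactness, and since $s$ is closed under conjunction, there are $\theta(\bar y)\in t$ and $\psi(y,c)\in s$, with $\psi(y,z)$ over $M$, such that $\theta(\bar y)\wedge\bigwedge_{i}\psi(y_i,c)$ is inconsistent. Call a formula $\psi(y,c)$ over $N$ \emph{small} if there are $n$ and $\theta(\bar y)$ over $M$ with the following two properties:
\begin{enumerate}
\item $\theta(y_1,\dots,y_n)\cup r(y_1)\cup\dots\cup r(y_n)$ is consistent, and
\item $N\models\neg\exists\bar y\,(\theta(\bar y)\wedge\bigwedge_i\psi(y_i,c))$.
\end{enumerate}
Then a complete $s\supseteq r$ satisfies (3) exactly when it contains no small formula. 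Every formula of $r$ is non-small: if $\rho\in r$ then $\theta\wedge\bigwedge_i\rho(y_i)$ is consistent with $r(y_1)\cup\dots\cup r(y_n)$. Thus it suffices to find an ultrafilter-type $s$ over $N$ with the following properties:
\begin{enumerate}
\item $s$ contains $r$;
\item every formula in $s$ is satisfied in $M$;
\item $s$ contains no small formula.
\end{enumerate}

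The natural route is to consider the family $\mathcal{I}$ of formulas $\chi(y,c)$ over $N$ that are either small or have $\chi(M,c)=\emptyset$. By compactness and the standard ultrafilter argument, the required $s$ exists as soon as two things hold: $\mathcal{I}$ is an ideal, meaning it is closed under finite disjunctions and implied formulas, and no formula of $r$ lies in $\mathcal{I}$. Closure under implication is clear. Formulas of $r$ are non-small, as noted above, and are realised in $M$ since $r$ is a type over $M$. The empty-in-$M$ formulas are closed under disjunction. A formula that is both realised in $M$ and non-small is what we keep. So the crux is the following claim.

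\emph{Claim.} If $\psi_1(y,c)$ and $\psi_2(y,c)$ are small, then $\psi_1\vee\psi_2$ is small, and likewise when one disjunct is small and the other is empty in $M$.

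This claim is the main obstacle. For two small disjuncts I will use a Ramsey argument. Let $\psi_j$ be witnessed by $\theta_j$ in $n_j$ variables, and let $K=R(n_1,n_2)$ be the Ramsey number. Put
\[
\Theta(y_1,\dots,y_K)=\bigwedge_{i_1<\dots<i_{n_1}}\theta_1(y_{i_1},\dots,y_{i_{n_1}})\wedge\bigwedge_{i_1<\dots<i_{n_2}}\theta_2(y_{i_1},\dots,y_{i_{n_2}}).
\]
Any $K$-tuple satisfying $\Theta$ whose coordinates all lie in $\psi_1\vee\psi_2$ contains, by Ramsey's theorem, an increasing $n_j$-subtuple lying entirely in $\psi_j$ for some $j$. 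That contradicts the smallness of $\psi_j$. What remains is to show that $\Theta\cup\bigcup_i r(y_i)$ is consistent.

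For this consistency I would first strengthen the witnesses. I try to choose $\theta_j$ so that $\theta_j\cup r^{\otimes}$ is realised by an $M$-indiscernible sequence of realisations of $r$. The tool is Ramsey and compactness applied to $t_j=\mathrm{tp}(a^j_1,\dots,a^j_{n_j}/M)$. To get a joint realisation, one plan is to take a global $M$-invariant (coheir) extension $q$ of $r$ and Morley sequences in $q$. A second plan is to replace the witness $\theta_j$ by the weaker condition that "$\psi_j$ contains no $n_j$ terms of some $M$-indiscernible sequence in $r$". I expect the precise choice of witnesses that makes both ingredients work to be the delicate point:
\begin{enumerate}
\item the Ramsey amalgamation, which needs witnesses that survive on arbitrary increasing subtuples, and
\item the converse direction, namely that non-smallness in this modified sense still yields item (3).
\end{enumerate}

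For the mixed case, where $\psi_1$ is small and $\psi_2(M,c)=\emptyset$, I would use finite satisfiability. Work inside the ideal of formulas whose trace on $M$ is covered: if $\psi_1\vee\psi_2$ were non-small, the points witnessing this could be pushed into $\psi_1$. I would handle this by the same Ramsey device after noting that any $s$ finitely satisfiable in $M$ automatically omits $\psi_2$.
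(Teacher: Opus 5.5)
Your translation of (3) into ``$s$ contains no small formula'' is correct. So is the reduction to showing that no formula of $r$ lies in the ideal generated by the small formulas and the $M$-empty formulas. The gap is the Claim, and a better choice of witnesses will not close it: the Claim is false already for two small disjuncts, and the required $s$ need not exist.

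Here is a counterexample. Let $T$ be the theory of the Fra\"{i}ss\'{e} limit of the finite two-sorted structures $(O,W;E,S,D,H)$ satisfying the following:
\begin{enumerate}
\item $E$ is an equivalence relation on $O$;
\item $S\subseteq E$ is an equivalence relation with at most two classes inside each $E$-class;
\item each $D(w,\cdot)$ is a union of $E$-classes;
\item each $H(w,\cdot)\subseteq D(w,\cdot)$ is a union of $S$-classes containing exactly one of any two points of $D(w,\cdot)$ that are $E$-equivalent but $S$-inequivalent.
\end{enumerate}
This class has amalgamation, so $T$ has quantifier elimination.

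Let $r(y)$, with $y$ of sort $O$, say $\neg E(y,m)$ for all $m\in O(M)$ and $\neg D(w,y)$ for all $w\in W(M)$. By QE, $r$ is complete, and $\theta(y_1,y_2):=E(y_1,y_2)\wedge\neg S(y_1,y_2)$ is consistent with $r(y_1)\cup r(y_2)$. By saturation, choose $c\in W(N)$ with $D(c,m)$ for every $m\in O(M)$.

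Then $\psi_1:=D(c,y)\wedge H(c,y)$ and $\psi_2:=D(c,y)\wedge\neg H(c,y)$ are both small, witnessed by $\theta$. However, $\psi_1\vee\psi_2\equiv D(c,y)$ is not small, since by QE and the extension property every finite configuration of realisations of $r$ fits inside $D(c,\cdot)$. It also holds on all of $O(M)$. Accordingly, your $\Theta$ (pigeonhole with $K=n_1+n_2-1=3$ is the relevant bound, not a graph Ramsey number) would require three pairwise $E$-equivalent, $S$-inequivalent points, which do not exist.

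Worse, $y=y\rightarrow\psi_1\vee\psi_2\vee\neg D(c,y)$, and $\neg D(c,y)$ has no solution in $O(M)$. Hence every $s\supseteq r$ finitely satisfiable in $M$ contains $\psi_1$ or $\psi_2$, and so violates (3) for $t=\theta$. The statement, with its requirement of finite satisfiability in $M$, therefore fails in this theory.

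Your mixed case is also false as stated, already in DLO with $r$ a non-realised cut. There $y=m_0$ is small, $c_1<y<c_2$ with $c_1<c_2$ in $r(N)$ is empty in $M$, and their disjunction is neither. Your appeal in that case to a finitely satisfiable $s$ is also circular.

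For comparison, the paper never needs small formulas to be closed under disjunction. It realises every relevant $t$ inside one tuple $(b_i)_{i<\lambda}$, so finitely many smallness witnesses merge into a single $\theta\in u$. It then claims that, for this fixed $\theta$, $v\cup w$ is consistent. The same example refutes that Claim: take $\varphi_1=\psi_1$ and $\varphi_2=\psi_2$, and note that Case 1 cannot occur, since $\neg D(w,y)\in r$ for $w\in W(M)$.

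The step that fails is the end of Case 2. Compactness yields $b\models v$ and $\bar c'\models\mathrm{tp}(\bar c/M)$ with $b\not\models\bigvee_i\varphi_i(y,c'_i)$. But $\bar c'$ need not lie in $N$ (here it cannot), and an automorphism over $M$ taking $\bar c'$ to $\bar c$ does not preserve $v$, so no contradiction follows.

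In the example, a strictly invariant extension of $r$ does exist, but it is not a coheir. It is the type of a point that is $E$-inequivalent to all of $O(N)$ and lies outside every $D(w,\cdot)$ with $w\in W(N)$. What cannot be secured is exactly the coheir clause, which is what the later proofs use to find $d_{k+2},\dots,d_{k+m+1}\in M$.
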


\proof  Let $U$ be a further sufficiently saturated elementary extension, so $M\prec^* N\prec^* U$. Let $(b_i)_{i<\lambda}$ be an infinite tuple of realisations of $r$ in $N$ such that, for every $n\in\mathbb{N}$ and type $t(y_1,...,y_n)$ over $M$, if $t(y_1,...,y_n)\cup r(y_1)\cup ... \cup r(y_n)$ is consistent then there exist $i_1,...,i_n<\lambda$ such that $(b_{i_1},...,b_{i_n})\models t$. Let $u((y_i)_{i<\lambda})=\text{tp}((b_i)_{i<\lambda}/M)$. Let $$v(y)=\{\varphi(y)\ |\ \varphi\text{ is over }N\text{ and }\neg\varphi\text{ is not realised in }M\}.$$ We complete the proof by showing that $$u((y_i)_{i<\lambda})\cup v(y_0)\cup\{\varphi(y_i)\leftrightarrow\varphi(y_j)\ |\ i,j<\lambda\text{ and }\varphi\text{ is over }N\}$$ is consistent, since then, for any realisation $(b'_i)_{i<\lambda}$, we could choose $s=\text{tp}(b'_0/N)$. Let $\theta(y_{i_1},...,y_{i_m})\in u$ and, for simplicity of notation, assume $(i_1,...,i_m)=(1,...,m)$. Let $$w(y)=\{\neg\varphi(y)\ |\ \varphi\text{ is over }N\text{ and }\{\theta(y_1,...,y_m),\varphi(y_1),...,\varphi(y_m)\}\text{ is inconsistent}\}.$$

{\bf Claim:} The set $v(y)\cup w(y)$ is consistent.

{\bf Proof of Claim:} Suppose not. Then, by compactness, there exist $$\neg\varphi_1(y,c_1),...,\neg\varphi_k(y,c_k)\in w(y)$$ such that, for every $b\in U$, if $b\models v(y)$ then $b\models\varphi_1(y,c_1)\vee...\vee\varphi_k(y,c_k)$. We have made the parameters $c_1,...,c_k\in N$ explicit so that $\varphi_1(y,z_1)$,...,$\varphi_k(y,z_k)$ are over $M$. We consider two cases.

Case 1: Assume that, for every $\alpha(z_1,...,z_k)\in \text{tp}(c_1,...,c_k/M)$, there exists $(c'_1,...,c'_k)\in\alpha(M)$ such that, for some $i\in \{1,...,k\}$, $\varphi_i(y,c'_i)\in r$. For each such $(c'_1,...,c'_k)\in\alpha(M)$, $\{\theta(y_1,...,y_m),\varphi_i(y_1,c'_i),...,\varphi_i(y_m,c'_i)\}$ is consistent, for some $i\in \{1,...,k\}$. By compactness, we have $i\in\{1,...,k\}$ and $c'_i\models \text{tp}(c_i/M)$ such that $\{\theta(y_1,...,y_m),\varphi_i(y_1,c'_i),...,\varphi_i(y_m,c'_i)\}$ is consistent. But then $\{\theta(y_1,...,y_m),\varphi_i(y_1,c_i),...,\varphi_i(y_m,c_i)\}$ is consistent, contradicting the fact that $\neg\varphi_i(y,c_i)\in w(y)$. 

Case 2: Assume there exists $\alpha(z_1,...,z_k)\in \text{tp}(c_1,...,c_k/M)$ such that, for all $(c'_1,...,c'_k)\in\alpha(M)$ and $i\in\{1,...,k\}$, $\varphi_i(y,c'_i)\notin r$. Then, for each $\alpha(z_1,...,z_k)\in \text{tp}(c_1,...,c_k/M)$, there exist $b,c'_1,...,c'_k\in M$ such that $b\models v(y)$, $(c'_1,...,c'_k)\models \alpha(z_1,...,z_k)$ and $b$ does not realise $\varphi_1(y,c'_1)\vee...\vee\varphi_k(y,c'_k)$. By compactness, there exist realisations $b\models v(y)$ and $(c'_1,...,c'_k)\models \text{tp}(c_1,...,c_k/M)$ such that $b$ does not realise $\varphi_1(y,c'_1)\vee...\vee\varphi_k(y,c'_k)$. Applying an automorphism, we can replace $(c'_1,...,c'_k)$ with $(c_1,...,c_k)$ and so obtain a contradiction.

As Cases 1 and 2 are exhaustive, we have a contradiction. This completes the proof of the claim.

Therefore there exists a realisation $b\models v(y)$ such that, for all $\varphi(y)\in \text{tp}(b/N)$, $$\{\theta(y_1,...,y_m),\varphi(y_1),...,\varphi(y_m)\}$$ is consistent. So then there exist $b_1,...,b_m\models \text{tp}(b/N)$ such that $(b_1,...,b_m)\models\theta(y_1,...,y_m)$.

It follows that $$\{\theta(y_1,...,y_m)\}\cup v(y_1)\cup\{\varphi(y_i)\leftrightarrow\varphi(y_j)\ |\ i,j=1,...,m\text{ and }\varphi\text{ is over }N\}$$ is consistent. By compactness then $$u((y_i)_{i<\lambda})\cup v(y_0)\cup\{\varphi(y_i)\leftrightarrow\varphi(y_j)\ |\ i,j<\lambda\text{ and }\varphi\text{ is over }N\}$$ is consistent, as desired. \endproof

\begin{definition}
Let $M\models T$ and let $r$ be a complete type over $M$. A strict Morley sequence of realisations of $r$ is a Morley sequence over $M$ for a strictly invariant extension $s$ of $r$ to some $N$ such that $M\prec^* N$. That is to say, it is a sequence $(b_n)_{n<\omega}$ such that, for each $n$, $b_n$ realises the restriction of $s$ to $M\cup\{b_0,...,b_{n-1}\}$. 
\end{definition}

This use of the phrase ``strict Morley sequence" is new only in that we are using the notion of strictly invariant extension given in Definition \ref{strict invariance}. An important property of the strict Morley sequences considered in \cite{chernikov2012}, in the NTP$_2$ setting, is that they witness dividing over a model. For our strict Morley sequences, the same is true of NIP formulas in an arbitrary theory. When proving this, we also relate dividing to quasi-dividing. Here, and subsequently, we use the well-known fact that NIP formulas have finite alternation number. 

\begin{proposition}\label{chardividing}
Let $M\prec^* N\models T$ and $b\in N$. Let $\varphi(x,y)$ be an NIP formula, with parameters in $M$. Let $(b_n)_{n\in\mathbb{N}}$ be a strict Morley sequence of realisations of $\text{tp}(b/M)$. The following are equivalent.

\begin{enumerate}\label{dividing}
\item $\varphi(x,b)$ divides over $M$.
\item $\{\varphi(x,b_n):n\in\mathbb{N}\}$ is inconsistent.
\item $\varphi(x,b)$ quasi-divides over $M$.
\end{enumerate}
\end{proposition}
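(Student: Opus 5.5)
I will prove the cycle of implications (2)$\Rightarrow$(1)$\Rightarrow$(3)$\Rightarrow$(2).

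\emph{(2)$\Rightarrow$(1).} A strict Morley sequence is a Morley sequence of an $M$-invariant type $s\supseteq\text{tp}(b/M)$. It is therefore indiscernible over $M$ and consists of realisations of $\text{tp}(b/M)$. By saturation of $N$ it may be taken inside $N$. Hence its inconsistency witnesses dividing directly.

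\emph{(1)$\Rightarrow$(3).} Suppose $(b'_n)$ is an $M$-indiscernible sequence in $\text{tp}(b/M)$ with $\{\varphi(x,b'_n)\}$ inconsistent. By compactness some finite subset $\{\varphi(x,b'_{n_1}),\dots,\varphi(x,b'_{n_k})\}$ is inconsistent. Each $b'_{n_i}$ realises $\text{tp}(b/M)$, so $\{\varphi(x,b'):b'\models\text{tp}(b/M)\}$ is not finitely consistent.

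\emph{(3)$\Rightarrow$(2).} This is the main step, and it is where NIP and strictness are used. Suppose $\varphi(x,b'_1),\dots,\varphi(x,b'_k)$ is inconsistent for some $b'_1,\dots,b'_k\models r:=\text{tp}(b/M)$. Let $t(y_1,\dots,y_k)=\text{tp}(b'_1,\dots,b'_k/M)$. Then $t\cup r(y_1)\cup\dots\cup r(y_k)$ is consistent, so by condition (3) of Definition \ref{strict invariance} there are $c_1,\dots,c_k$, each realising $s$, with $(c_1,\dots,c_k)\models t$. In particular $\neg\exists x\bigwedge_i\varphi(x,c_i)$ holds.

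Now suppose, for contradiction, that $\{\varphi(x,b_n):n\in\mathbb{N}\}$ is consistent, realised by some $a$. I want to exploit $M$-invariance of $s$ together with finite alternation of $\varphi$. Extend the Morley sequence to a long sequence $(b_n)_{n<\omega+\omega\cdots}$, realising $s$ over everything so far. Since each $c_i\models s$ and $s$ is invariant, for any finite set $B$ of parameters I can realise $s^{(\omega)}$ over $B\cup\{c_1,\dots,c_k\}$. I will arrange a sequence in which the $c_i$ are interleaved between long blocks of a Morley sequence of $s$. Concretely, let $I_0,\dots,I_k$ be Morley sequences in $s$ over $M\cup c_1\cdots c_k$ with $I_0+c_1+I_1+\dots$, chosen suitably.

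The point of the construction is as follows. By invariance, $(c_i)$ sits on top of the Morley sequences in such a way that the combined sequence $J=I_0\,c_1\,I_1\cdots c_k\,I_k$ has the property that, after deleting the $c_i$, it is a Morley sequence of $s$ and hence has the same type over $M$ as $(b_n)$. So $\{\varphi(x,d):d\in I_0\cup\dots\cup I_k\}$ is consistent. Moreover, every finite subsequence consisting of one element from each block together with the $c_i$ looks over $M$ like the corresponding pattern from a Morley sequence, with the $c_i$'s type coming from $s$.

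To finish I would use compactness and the standard NIP argument: consider a realisation $a'$ of $\varphi(x,d)$ for all $d$ in the blocks. Since $\bigwedge\varphi(x,c_i)$ is inconsistent, $a'$ fails some $\varphi(x,c_i)$. Arrange the construction repeatedly, say $\omega$ many alternating copies all realising the same type over $M$ via invariance, so that a single $a'$ must alternate between truth and falsity of $\varphi$ along an $M$-indiscernible-like sequence in $s$ more than the alternation bound of $\varphi$ allows.

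The obstacle is making the sequence genuinely indiscernible. I would first try taking the $c$-tuples $(c^j_1,\dots,c^j_k)\models t\cup s^{\otimes}$ in successive positions of a Morley sequence of $s$, and extracting an indiscernible sequence via Ramsey and compactness, so that the alternation-number contradiction applies.
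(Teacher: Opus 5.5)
\textbf{There is a genuine gap in (3)$\Rightarrow$(2).} Your directions (2)$\Rightarrow$(1)$\Rightarrow$(3) are fine and agree with the paper. Your set-up for (3)$\Rightarrow$(2) is also the right one: contrapose, and use clause (3) of Definition~\ref{strict invariance} to get $(c_1,\dots,c_k)\models t$ with each $c_i\models s$.

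But the proof stops exactly where the work is. You never produce an $M$-indiscernible sequence along which a fixed realisation alternates too often, and the construction you sketch would not give one:
\begin{enumerate}
\item Choosing $I_0$ Morley in $s$ over $Mc_1\cdots c_k$ puts $I_0$ \emph{after} the $c_i$ in the Morley order. Since $s$ need not commute with itself, $I_0\,c_1$ need not be a Morley sequence.
\item Putting $c_1$ and $c_2$ into \emph{different} gaps is worse. For $I_0\,I_1\,c_2\,I_2\cdots$ to be Morley you need $c_2\models s|_{MI_0I_1}$, whereas you chose $I_1$ over $c_2$. If you chose $I_1$ before $c_2$ instead, $c_2$ would have to realise $s$ over a block depending on $c_1$ while also realising $t$ together with $c_1$. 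Clause (3) does not provide this.
\item Your claim that subsequences containing the $c_i$ ``look like'' Morley sequences is false once two of them occur, since jointly they realise the arbitrary type $t$.
\item Ramsey extraction is no substitute. The indiscernibility must hold for the specific sequence on which a fixed $a'$ alternates, so it has to come from the Morley structure itself.
\end{enumerate}

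\textbf{The missing idea.} The $c_i$ never need to sit together in one indiscernible sequence; each is inserted \emph{separately into the same gap}. Take the initial segment to be $(b_n)_{n<\omega}\subseteq N$, so each $c_i\models s|_{Mb_{<\omega}}$. Choose a tail $(b_{\omega+j})_j$ Morley in $s$ over $Mb_{<\omega}c_1\cdots c_k$. Then for every $i$ the sequence $b_{<\omega}\,c_i\,(b_{\omega+j})_j$ is Morley in $s$, hence $M$-indiscernible with the same EM-type as $(b_n)$.

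The paper then fixes $a$ with $\varphi(a,b_n)$ eventually true and with the maximal (finite, by NIP) number of alternations. By compactness it re-chooses $a$ so that $\varphi(a,b_{\omega+j})$ also holds for all $j$. If $\neg\varphi(a,c_i)$ for some $i$, the sequence with $c_i$ inserted would give two extra alternations, contradicting maximality. So $a\models\bigwedge_i\varphi(x,c_i)$, contradicting the choice of $t$.

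Alternatively, your ``repeat the construction'' idea can be made rigorous. Build in $N$ a chain $d_0,\bar c^1,d_1,\bar c^2,d_2,\dots$ such that:
\begin{enumerate}
\item each $\bar c^j\models t$;
\item each $c^j_i\models s$ restricted to $M$ and everything before $\bar c^j$;
\item each $d_j\models s$ restricted to $M$ and everything before it.
\end{enumerate}
Each step is possible by clause (3) and saturation of $N$. Then for every choice function $j\mapsto i_j$ the sequence $d_0\,c^1_{i_1}\,d_1\,c^2_{i_2}\cdots$ is Morley in $s$. Let $a'$ realise $\{\varphi(x,d_j)\}$, which is consistent because $\{\varphi(x,b_n)\}$ is, and pick each $i_j$ with $\neg\varphi(a',c^j_{i_j})$. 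Then $\varphi(a',\cdot)$ alternates infinitely often along an $M$-indiscernible sequence, contradicting NIP.
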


\proof From the definitions and the indiscernibility of $(b_n)_{n\in\mathbb{N}}$ over $M$, $(2)\Rightarrow (1)\Rightarrow (3)$. We prove $(3)\Rightarrow (2)$. Suppose (2) is false and so $\{\varphi(x,b_n):n\in\mathbb{N}\}$ is consistent. Let $a\in N^{|x|}$ be such that $N\models\varphi(a,b_n)$ for large enough $n$ and, subject to that constraint, the truth value of $\varphi(a,b_n)$ alternates the maximum number of times. We may assume there is some $m\in\mathbb{N}$ such that, for $n=1,...,m-1$, $N\models\varphi(a,b_n)\leftrightarrow\neg\varphi(a,b_{n+1})$ and, for $n\geq m$, $N\models\varphi(a,b_n)$. Let $b'_1,...,b'_k\models\text{tp}(b/M)$. Let $s$ be the strictly invariant extension of $\text{tp}(b/M)$ to $N$ used to construct the Morley sequence $(b_n)_{n\in\mathbb{N}}$. We then have $(b''_1,...,b''_k)\models \text{tp}(b'_1,...,b'_k/M)$ such that, for each $i=1,...,k$, $b''_i$ realises the restriction of $s$ to $M\cup\{b_i:i\in\mathbb{N}\}$. We can then obtain a Morley sequence $(b_{\omega+i})_{i\in\mathbb{N}}$ for $s$ over $M\cup\{b_i:i\in\mathbb{N}\}\cup\{b''_1,...,b''_k\}$. 

By compactness, we may assume $N\models\varphi(a,b_{\omega+i})$ for all $i\in\mathbb{N}$. For each $i=1,...,k$, if we set $b_\omega=b''_i$ then the sequence $(b_1,b_2,b_3,...,b_\omega,b_{\omega+1},b_{\omega+2},...)$ is indiscernible over $M$. We must then have $N\models\varphi(a,b''_i)$, since otherwise we could rechoose $a$ so as to contradict the maximality used in the original choice of $a$. So $N\models\varphi(a,b''_1)\wedge...\wedge\varphi(a,b''_k)$ and so $\{\varphi(x,b''_1),...,\varphi(x,b''_k)\}$ is consistent. Therefore $\{\varphi(x,b'_1),...,\varphi(x,b'_k)\}$ is consistent. We have shown that $\varphi(x,b)$ does not quasi-divide over $M$.\endproof

We shall have use for the following refinement of Proposition \ref{dividing}. The proof is an easy modification of the previous one. 

\begin{proposition}\label{refined equivalence}
Let $M\prec^* N \prec^* U\models T$ and $b\in N$. Let $\varphi(x,y)$ be an NIP formula, with parameters in $M$. Let $(b_n)_{n\in\mathbb{N}}$ be a strict Morley sequence of realisations of $\text{tp}(b/M)$. Let $A\subseteq U^{|x|}$ be type-definable over $N$ by means of a partial type that is invariant over $M$. The following are equivalent, where $r=\text{tp}(b/M)$.
\begin{enumerate}
\item For some indiscernible sequence $(b'_n)_{n\in\mathbb{N}}$ of realisations of $r$, $\{\varphi(x,b'_n):n\in\mathbb{N}\}$ is not realised in $A$.
\item $\{\varphi(x,b_n):n\in\mathbb{N}\}$ is not realised in $A$.
\item $\{\varphi(x,b'):b'\in r(N)\}$ is not realised in $A$.
\end{enumerate}
\end{proposition}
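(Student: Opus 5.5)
We follow the proof of Proposition \ref{chardividing}, replacing ``consistent'' by ``realised in $A$'' throughout. The implication $(2)\Rightarrow(1)$ holds because $(b_n)_{n\in\mathbb{N}}$ is indiscernible over $M$ and consists of realisations of $r$. For $(1)\Rightarrow(3)$, take the indiscernible sequence $(b'_n)$ from (1). Since $N$ is sufficiently saturated and strongly homogeneous, we may replace $(b'_n)$ by a sequence in $N$ with the same type over $M$. One point needs care here: this move must not change whether $\{\varphi(x,b'_n)\}$ is realised in $A$. To handle it, choose the copy by an automorphism of $U$ fixing $M$. Because $A$ is defined by a partial type over $N$ that is invariant over $M$, such an automorphism maps $A$ to the set defined by the image of that partial type, which is the same partial type. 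Hence $A$ is preserved setwise. This is exactly where the invariance hypothesis on $A$ is used. Once $b'_n\in r(N)$ for all $n$, any realisation in $A$ of $\{\varphi(x,b'):b'\in r(N)\}$ also realises $\{\varphi(x,b'_n):n\in\mathbb{N}\}$. So (3) follows by contraposition.

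The main step is $(3)\Rightarrow(2)$. Suppose $\{\varphi(x,b_n):n\in\mathbb{N}\}$ is realised by some $a\in A$. Among all $a\in A$ such that $\varphi(a,b_n)$ holds for all sufficiently large $n$, choose one for which the truth value of $\varphi(a,b_n)$ alternates the maximum number of times. Finite alternation of the NIP formula $\varphi$ guarantees that this maximum exists. Now let $b'_1,\dots,b'_k\in r(N)$ be arbitrary. Exactly as before, produce $b''_1,\dots,b''_k$ with $(b''_1,\dots,b''_k)\models\text{tp}(b'_1,\dots,b'_k/M)$ and each $b''_i$ realising $s$ restricted to $M\cup\{b_n:n\in\mathbb{N}\}$. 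Then continue the sequence with a Morley sequence $(b_{\omega+i})_{i\in\mathbb{N}}$ for $s$ over everything so far.

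Next we need $\varphi(a,b_{\omega+i})$ for all $i$, with $a$ still in $A$. Compactness gives this, since $A$ is type-definable: the conditions ``$x\in A$'', the alternation pattern on the $b_n$, and $\varphi(x,b_{\omega+i})$ together form a partial type. Each finite part is satisfied after applying an automorphism over $M\cup\{b_n\}$ that moves finitely many $b_{\omega+i}$ onto far-out $b_n$'s, and the invariance of $A$ over $M$ is again what makes this legitimate. If some $\neg\varphi(a,b''_i)$ held, then $(b_n)_n{}^\frown b''_i{}^\frown(b_{\omega+j})_j$ would be $M$-indiscernible. Compactness and invariance would then give some $a'\in A$ with strictly more alternations, contradicting maximality. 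So $a\in A$ realises $\{\varphi(x,b''_1),\dots,\varphi(x,b''_k)\}$.

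It remains to get back from the $b''_i$ to the arbitrary $b'_i$ and then to all of $r(N)$. Transfer back to $b'_1,\dots,b'_k$ by an automorphism fixing $M$; this preserves $A$, so $\{\varphi(x,b'_1),\dots,\varphi(x,b'_k)\}$ is realised in $A$. Finally, $A$ is type-definable and $U$ is saturated over $N$, so compactness upgrades this finite realisability to a realisation in $A$ of all of $\{\varphi(x,b'):b'\in r(N)\}$. I expect the main obstacle to be the repeated bookkeeping ensuring that each automorphism and compactness step keeps the witness inside $A$. Each such step rests on $A$ being type-definable by an $M$-invariant partial type, so this verification needs to be done carefully at every use.
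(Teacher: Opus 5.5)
Your route is the one the paper intends: it only says the proof is an easy modification of that of Proposition~\ref{chardividing}. Your final compactness step is also genuinely needed here, since (3) concerns realisation in $A$ rather than finite consistency. However, the invariance principle you invoke is false as stated.

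Invariance of the defining type $t$ over $M$ gives $\sigma(t)=t$, hence $\sigma(A)=A$, only for $\sigma\in\text{Aut}(U/M)$ with $\sigma(N)=N$. For a general $\sigma\in\text{Aut}(U/M)$, $\sigma(t)$ is a type over $\sigma(N)$. For instance, $t=\{x\neq c:c\in N\}$ is invariant over $M$ with $A=U\setminus N$, while $\sigma(A)=U\setminus\sigma(N)\neq A$ whenever $\sigma(N)\neq N$.

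Consequently your argument for $(1)\Rightarrow(3)$ fails in precisely the case it was designed for. An automorphism carrying a sequence not contained in $N$ into $N$ cannot fix $N$ setwise. The implication is nevertheless true and needs no invariance at all:
\begin{enumerate}
\item If the sequence in (1) lies in $N$, as in the paper's conventions (compare the definition of dividing and Propositions~\ref{fact 3} and~\ref{fact 4}), the implication is immediate.
\item If you allow sequences in $U$, use compactness. Suppose $a\in A$ realises $\{\varphi(x,b'):b'\in r(N)\}$. Take any finite piece $\chi(x,c)$ of $t$ and any $b^*_1,\dots,b^*_k\models r$. Realise $\text{tp}(b^*_1,\dots,b^*_k/Mc)$ in $N$ to see that $\chi(x,c)\wedge\bigwedge_i\varphi(x,b^*_i)$ is consistent, and conclude by saturation of $U$.
\end{enumerate}

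In $(3)\Rightarrow(2)$, where invariance really is used, each automorphism must fix $N$ setwise. This is available because all the tuples involved ($b_n$, $b''_i$, $b_{\omega+i}$, $b'_i$ and the re-indexed subsequence) can be chosen in $N$. Take an automorphism of $N$ over $M$ and extend it to $U$ by strong homogeneity of $U$; this is exactly the device the paper uses in the proof of Proposition~\ref{fact 3}. You should make this explicit rather than appeal to arbitrary automorphisms of $U$ over $M$.

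Also, in the step forcing $\varphi(a,b_{\omega+i})$ for all $i$, the automorphism should fix $M$ together with only the finitely many $b_n$ occurring in the given finite piece of the type. It should not fix all of $M\cup\{b_n:n\in\mathbb{N}\}$, since an automorphism fixing every $b_n$ cannot move $b_{\omega+i}$ onto one of them. With these corrections your argument goes through.
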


\section{Definable $(p,q)$-theorems}

In this section we prove Theorems \ref{mt version}, \ref{definable pq} and \ref{refined version}. We shall need two preliminary facts. The following is well-known and a proof is given in \cite{Simon2015} (Corollary 2.3 in that paper). 

\begin{fact}\label{fact 1} 
    Let $T$ be a complete theory and suppose $\theta(x,z)$ has NIP with respect to $T$. Let $M\prec^* N\models T$ and $c\in N$. Suppose $\theta(x,c)$ does not divide over $M$. Then there is a formula $\tau\in\text{tp}(c/M)$ such that, for every $c'\in \tau(N)$, $\theta(x,c')$ does not divide over $M$.
\end{fact}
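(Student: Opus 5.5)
The natural route is to show that \emph{dividing} of instances of $\theta$ is a closed condition on $\text{tp}(c'/M)$. Then its complement, non-dividing, is open, and the required $\tau$ exists by compactness. I would argue by contradiction. Suppose that for every $\tau\in\text{tp}(c/M)$ there is $c_\tau\in\tau(N)$ with $\theta(x,c_\tau)$ dividing over $M$. The aim is to derive, by compactness, some $c'\models\text{tp}(c/M)$ with $\theta(x,c')$ dividing over $M$. Since dividing over $M$ depends only on $\text{tp}(c'/M)$, this contradicts the hypothesis on $c$.

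To make the compactness step work, I would first express dividing of $\theta(x,c')$ by a first-order condition of bounded complexity. For an NIP formula, dividing is witnessed by an $M$-indiscernible sequence $(d_i)_{i\in\mathbb{N}}$ with $d_0=c'$ such that $\{\theta(x,d_i):i\in\mathbb{N}\}$ is inconsistent. By indiscernibility and compactness, this set is then $k$-inconsistent for some $k$, meaning every $k$ of the formulas are jointly inconsistent.

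The key claim is that $k$ can be bounded by a number $K$ depending only on $\theta$. Granting this, I would argue as follows. For each $\tau$, $\theta(x,c_\tau)$ is witnessed to divide by an $M$-indiscernible sequence $(d^\tau_i)$ with $d^\tau_0=c_\tau$ that is $K$-inconsistent. By compactness and saturation of $N$, the following set of conditions on $(d_i)_{i\in\mathbb{N}}$ is consistent:
\begin{itemize}
\item $d_0\models\text{tp}(c/M)$;
\item $(d_i)$ is $M$-indiscernible;
\item $\neg\exists x\bigwedge_{i<K}\theta(x,d_i)$.
\end{itemize}
A realisation shows that $\theta(x,c')$ divides over $M$ for $c'=d_0$, which is the desired contradiction.

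I expect the uniform bound $K$ to be the main obstacle. I would attack it using the alternation number $n$ of $\theta$, which the paper already uses in the proof of Proposition \ref{chardividing}. Let $(d_i)$ be an $M$-indiscernible sequence such that $\{\theta(x,d_i):i<K\}$ is consistent, with $K$ large compared with $n$. I would stretch the sequence to a dense order and realise a long finite block of it. Then, as in the maximal-alternation argument in Proposition \ref{chardividing}, I would try to show that any $a$ realising the block with maximal alternation must in fact satisfy $\theta(x,d_i)$ on a whole end segment. Indiscernibility would then propagate consistency to the entire sequence.

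If this direct argument proves awkward, there is a fallback based on Proposition \ref{chardividing}. That proposition replaces dividing by quasi-dividing, and hence by inconsistency along a strict Morley sequence. One would then need a uniform number of realisations of $\text{tp}(c'/M)$ witnessing quasi-dividing. For that I would appeal to the $(p,q)$-theorem (Theorem \ref{pq}) applied to the dual system of $\theta$, which has finite dual VC-dimension.
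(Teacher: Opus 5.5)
Your reduction is the right one. It is exactly the shape of Simon's proof of Corollary 2.3 in \cite{Simon2015}, which the paper cites for this fact and adapts in the proof of Proposition \ref{fact 3}. That proof has two steps. First, a bound $K$ depending only on $\theta$ such that, for every $M$-indiscernible $(d_i)$, consistency of $\{\theta(x,d_i):i<K\}$ implies consistency of $\{\theta(x,d_i):i\in\mathbb{N}\}$. Second, a compactness argument. Your compactness half is fine.

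\textbf{The gap.} The uniform bound carries all of the NIP content, and you leave it unproved. Your alternation sketch does not supply it. An $a$ satisfying $\theta(x,d_i)$ on an end segment exists if and only if, by indiscernibility, the whole family is consistent, so that is just a restatement of the conclusion. Moreover, the maximal-alternation trick of Proposition \ref{chardividing} starts from an $a$ realising $\varphi(x,b_n)$ for all large $n$. That is precisely what you do not have here, and you give no mechanism forcing it. The $(p,q)$ fallback is only gestured at.

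\textbf{The missing idea} is a minimality argument. Suppose $(d_i)$ is indiscernible and $\{\theta(x,d_i):i\in\mathbb{N}\}$ is inconsistent. By compactness and indiscernibility, let $m$ be least such that $\{\theta(x,d_i):i<m\}$ is inconsistent. By indiscernibility, every $m-1$ instances are consistent and no $m$ are. Hence if $|S|=m-1$ and $a_S\models\bigwedge_{i\in S}\theta(x,d_i)$, then $\theta(a_S,d_j)$ holds exactly for $j\in S$.

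Now suppose $m-1\geq q$, where $q>\text{VC}^*$ of $\theta$. For each $S'\subseteq\{0,\dots,q-1\}$, pad $S'$ with $m-1-|S'|$ indices $\geq q$ to obtain $S$. Then $a_S$ lies, among the first $q$ sets $\theta(U,d_i)$, in exactly those with $i\in S'$. So these $q$ sets have all $2^q$ Venn fields non-empty, a contradiction. Hence $m\leq q$, so $K=q$ works; this is Lemma 2.2 of \cite{Simon2015}. Taking $S$ to be every other index instead gives $2m-3$ alternations, which yields a bound in terms of the alternation number. That is presumably the argument you were reaching for.

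Your $(p,q)$ fallback can also be completed, but only via the same kind of indiscernibility step. If the first $q$ instances are consistent, then by indiscernibility every $q$ of them are, so the family $\{\theta(U,d_i)\}$ has the $(q,q)$-property. Theorem \ref{infinite pq} then splits it into finitely many finitely consistent pieces. One piece contains infinitely many instances, and indiscernibility transfers consistency to the whole sequence. This works, but it invokes Matou\v{s}ek's theorem for what is an elementary lemma.
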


The following is part of Lemma 2.4 in \cite{Simon2015}, modified to include the assumption, in item (2), that $\theta(N,c)\neq\emptyset$ for every $c\in\tau(N)$. Kaplan points out in \cite{Kaplan2024} that this assumption is necessary. It clearly follows from item (1). 

\begin{fact}\label{fact 2}
    Let $T$ be a complete theory and $M\prec^*N\models T$. Suppose $\theta(x,z)$ has NIP with respect to $T$ and $\tau(z)$ is a formula over $M$. The following are equivalent.
    \begin{enumerate}
        \item For all $c\in\tau(N)$, $\theta(x,c)$ does not divide over $M$. 
        \item The set system $(N^{|x|},\{\theta(N,c):c\in\tau(N)\})$ has the $(p,q)$-property, for some $p\geq q>\text{VC}^*(N^{|x|},\{\theta(N,c):c\in\tau(N)\})$, and, for every $c\in\tau(N)$, $\theta(N,c)\neq\emptyset$.
    \end{enumerate}
\end{fact}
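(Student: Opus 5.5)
The plan is to prove the two implications separately, using only compactness, Ramsey's theorem and Theorem \ref{infinite pq}. Write $\mathcal{S}=\{\theta(N,c):c\in\tau(N)\}$. Since $\theta(x,z)$ has NIP, $d=\text{VC}^*(N^{|x|},\mathcal{S})$ is finite.

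\emph{(1)$\Rightarrow$(2).} Non-emptiness is immediate: if $\theta(N,c)=\emptyset$, then $\theta(x,c)$ is inconsistent, so it divides over $M$, contradicting (1).

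For the $(p,q)$-property, take $q=d+1$. Suppose, for contradiction, that no $p\geq q$ works. Then for every $p$ there are $c_1,\dots,c_p\in\tau(N)$ such that every $q$ of the sets $\theta(N,c_i)$ have empty intersection.
\begin{enumerate}
\item By compactness, the partial type in variables $(z_i)_{i<\omega}$ asserting $\tau(z_i)$ for all $i$ and $\neg\exists x\bigwedge_{i\in I}\theta(x,z_i)$ for every $I\subseteq\omega$ with $|I|=q$ is consistent.
\item By Ramsey's theorem and compactness, this type has a realisation that is an $M$-indiscernible sequence $(c_i)_{i<\omega}$. Such a sequence can be found inside $N$, since $N$ is sufficiently saturated over $M$.
\item All the $c_i$ realise $\text{tp}(c_0/M)$, and $c_0\in\tau(N)$. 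The set $\{\theta(x,c_i):i<\omega\}$ is $q$-inconsistent, so $\theta(x,c_0)$ divides over $M$, contradicting (1).
\end{enumerate}

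\emph{(2)$\Rightarrow$(1).} Here every set in $\mathcal{S}$ is non-empty, and $\mathcal{S}$ has the $(p,q)$-property with $q>d$. Theorem \ref{infinite pq} therefore gives $\mathcal{S}=\mathcal{S}_1\cup\dots\cup\mathcal{S}_n$, where each $\mathcal{S}_j$ has the property that all of its non-empty finite subfamilies have non-empty intersection.

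Suppose some $c\in\tau(N)$ has $\theta(x,c)$ dividing over $M$.
\begin{enumerate}
\item Take an $M$-indiscernible sequence $(c_i)_{i<\omega}$ in $N$ of realisations of $\text{tp}(c/M)$ with $\{\theta(x,c_i)\}$ inconsistent. Each $c_i$ lies in $\tau(N)$.
\item By compactness and indiscernibility, the family is $k$-inconsistent for some $k$: any $k$ of the formulas $\theta(x,c_i)$ are inconsistent.
\item The sets $\theta(N,c_i)$ are pairwise distinct. If two of them coincided, indiscernibility would force all of them to coincide. Their $k$-fold intersection would then equal a single non-empty set, contradicting $k$-inconsistency.
\item Consider the $n(k-1)+1$ distinct sets $\theta(N,c_1),\dots,\theta(N,c_{n(k-1)+1})$ in $\mathcal{S}$. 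By pigeonhole, $k$ of them lie in a single $\mathcal{S}_j$. Their intersection is then non-empty, which is a contradiction.
\end{enumerate}

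The step needing the most care is the extraction of an indiscernible sequence inside $N$ in (1)$\Rightarrow$(2). It is also where we must check that the resulting sequence lies in $\tau(N)$, which holds because $\tau$ is over $M$.
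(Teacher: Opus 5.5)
Your proof is correct. The direction (1)$\Rightarrow$(2) is essentially the argument the paper relies on. The paper defers to Lemma 2.4 of \cite{Simon2015}, and its adaptation appears in the proof of Proposition \ref{fact 4}: a failure of the $(p,q)$-property for every $p$ is turned, via Ramsey and compactness, into an $M$-indiscernible $q$-inconsistent sequence in $\tau(N)$.

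For (2)$\Rightarrow$(1) you take a genuinely different route. The paper's argument rests on an elementary lemma (Lemma 2.2 of \cite{Simon2015}, recalled in the proof of Proposition \ref{fact 3}): for an indiscernible sequence, $\{\theta(x,c_n)\}$ is consistent as soon as its first $q$ members are, for any $q>\text{VC}^*$. Hence an inconsistent $M$-indiscernible sequence of pairwise distinct sets is already $q$-inconsistent. It therefore violates the $(p,q)$-property for every admissible pair at once. The paper obtains distinctness from the non-emptiness clause, exactly as you do.

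You instead feed the $(p,q)$-property into Theorem \ref{infinite pq} and do a pigeonhole count against the $k$ produced by compactness. Your version uses indiscernibility only to get uniform $k$-inconsistency and distinctness, and it avoids the shattering argument. The price is invoking Matou\v{s}ek's deep theorem. That is legitimate here, since the theorem is quoted in the paper and is not derived from this fact, so there is no circularity.

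The paper's version stays elementary and gives sharper information: the VC bound alone forces any dividing witness to be $q$-inconsistent for every $q>\text{VC}^*$. This is precisely the fact it reuses in Propositions \ref{fact 3} and \ref{fact 4}.
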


\subsection{Proof of Theorem \ref{mt version}} Let $T$ be a complete $L$-theory and let $M\prec^* N\prec^* U\models T$. Let $\varphi(x,y)$ be a first-order $L$-formula that has NIP with respect to $T$. Let $b\in N$ and suppose $\varphi(x,b)$ does not divide over $M$. 

Using Proposition \ref{strictinv}, let $s(y)$ be a strictly invariant extension of $\text{tp}(b/M)$ to $N$ such that $s$ is finitely realised in $M$. Let $(b_n)_{n\in\mathbb{N}}$ be a Morley sequence for $s$ over $M$ in $N$. Let $(b'_n)_{n\in\mathbb{N}}$ be an indiscernible sequence in $N$ over $M$ that has the same EM-types over $M$ as the sequence $(...,b_3,b_2,b_1)$. Since $\varphi(x,b)$ does not divide over $M$, the set $\{\varphi(x,b'_n):n\in\mathbb{N}\}$ is consistent. 

Let $a\in U^{\left|x\right|}$ be such that the truth value of $\varphi(a,b'_n)$ alternates as many times as possible, subject to the constraint that $U\models\varphi(a,b'_n)$ eventually. We may assume there exists $m\in\mathbb{N}$ such that $U\models\varphi(a,b'_n)\leftrightarrow\neg\varphi(a,b'_{n+1})$, for all $n\in\{1,...,m-1\}$, and $U\models\varphi(a,b'_n)$, for all $n\geq m$. By compactness, there is some $k$ such that

\begin{enumerate}

\item there exists $a\in U^{\left|x\right|}$ such that $$U\models\bigwedge\limits_{n\in\{1,...,m-1\}}(\varphi(a,b'_n)\leftrightarrow\neg\varphi(a,b'_{n+1}))\wedge\varphi(a,b'_m)\wedge\varphi(a,b'_{m+2})\wedge...\wedge\varphi(a,b'_{m+k+1}),$$

\item for all $a$ as in (1), $U\models\varphi(a,b'_{m+1})$.

\end{enumerate}

Then, for the Morley sequence $(b_n)_{n\in\mathbb{N}}$, we have that

\begin{enumerate}

\item[(3)] there exists $a\in U^{\left|x\right|}$ such that $$U\models\varphi(a,b_1)\wedge...\wedge\varphi(a,b_k)\wedge\varphi(a,b_{k+2})\wedge\bigwedge\limits_{n\in\{k+2,...,k+m\}}(\varphi(a,b_n)\leftrightarrow\neg\varphi(a,b_{n+1})),$$

\item[(4)] for all $a$ as in (3), $U\models\varphi(a,b_{k+1})$.

\end{enumerate}

Since $s$ is finitely realised in $M$, there exist $d_{k+2},....,d_{k+m+1}\in M$ such that 

\begin{enumerate}

\item[(5)] there exists $a\in U^{\left|x\right|}$ such that $$U\models\varphi(a,b_1)\wedge...\wedge\varphi(a,b_k)\wedge\varphi(a,d_{k+2})\wedge\bigwedge\limits_{n\in\{k+2,...,k+m\}}(\varphi(a,d_n)\leftrightarrow\neg\varphi(a,d_{n+1})),$$

\item[(6)] for all $a$ as in (5), $U\models\varphi(a,b_{k+1})$.

\end{enumerate}

Let $z=(y_1,...,y_k)$ and let $\theta(x,z)$ be the formula $$\varphi(x,y_1)\wedge...\wedge\varphi(x,y_k)\wedge\varphi(x,d_{k+2})\wedge\bigwedge\limits_{n\in\{k+2,...,k+m\}}(\varphi(x,d_n)\leftrightarrow\neg\varphi(x,d_{n+1})).$$ By (5) and (6), there exists $a\in U^{\left|x\right|}$ such that $U\models\theta(a,b_1,...,b_k)$ and, for any such $a$, $U\models\varphi(a,b_{k+1})$. Let $\varphi'(x,y)$ be the formula $$\varphi(x,y)\wedge\varphi(x,d_{k+2})\wedge\bigwedge\limits_{n\in\{k+2,...,k+m\}}(\varphi(x,d_n)\leftrightarrow\neg\varphi(x,d_{n+1})).$$ We note that $\varphi'(x,y)$ has NIP. By (5), (6) and the indiscernibility of $(b_n)_{n\in\mathbb{N}}$ over $M$, $\{\varphi'(x,b_n):n\in\mathbb{N}\}$ is consistent. It follows, by Proposition \ref{chardividing}, that $\varphi'(x,b)$ does not quasi-divide over $M$. Then $\theta(x,b_1,...,b_k)$ does not quasi-divide over $M$ and so does not divide over $M$. Furthermore, $\theta(x,z)$ has NIP with respect to $T$ since $\varphi(x,y)$ does.

By Fact \ref{fact 1}, there is a formula $\tau(z)\in \text{tp}(b_1,...,b_k/M)$ such that, for all $(b''_1,...,b''_k)\models\tau$, $\theta(x,b''_1,...,b''_k)$ does not divide over $M$. Then, by Fact \ref{fact 2}, the set system $$(U^{|x|},\{\theta(U,b''_1,...,b''_k):(b''_1,...,b''_k)\in\tau(N)\})$$ has the $(p',q')$-property for some $$p'\geq q'> \text{VC}^*(U^{|x|},\{\theta(U,b''_1,...,b''_k):(b''_1,...,b''_k)\in\tau(N)\}).$$ By Theorem \ref{infinite pq} and the fact that $N\prec^*U$, there is a positive integer $n'$, depending only on $(p',q')$, such that there exist $a_1,...,a_{n'}\in U^{\left|x\right|}$ such that, for all $c\in \tau(N)$, $U\models\theta(a_i,c)$ for some $i\in\{1,...,n'\}$. 

Using the fact that $(b_n)_{n\in\mathbb{N}}$ is a strict Morley sequence over $M$, for any finite set $B\subseteq N^{|y|}$ of realisations of $\text{tp}(b/M)$ there is an automorphism $\sigma$ of $N$ over $M$ that maps $B$ to a set of realisations of $\text{tp}(b_{k+1}/Mb_1...b_k)$. Then, for all $a\in U^{|x|}$ such that $U\models \theta(a,b_1,...,b_k)$, we have $U\models \varphi(a,b')$ for all $b'\in \sigma(B)$. It follows that, for all $a\in U^{|x|}$ such that $U\models\theta(a,\sigma^{-1}(b_1),...,\sigma^{-1}(b_k))$, $U\models \varphi(a,b')$ for all $b'\in B$.

By compactness, there is a formula $\psi\in \text{tp}(b/M)$ such that, for any $b''_1,...,b''_{n'}\in \psi(N)$, there exists $c\in \tau(N)$ such that, for all $a\in U^{|x|}$, if $U\models \theta(a,c)$ then, for all $i\in \{1,...,n'\}$, $U\models\varphi(a,b''_i)$. If for each $i\in\{1,...,n'\}$ there exists $b''_i\in\psi(N)$ such that $U\models\neg\varphi(a_i,b''_i)$ then we obtain a contradiction. Therefore, for at least one $i\in\{1,...,n'\}$, $U\models\varphi(a_i,b'')$ for all $b''\in\psi(N)$. This completes the proof of Theorem \ref{mt version}.

\subsection{Proof of Theorem \ref{definable pq}}
Let $L$ be a two-sorted language with one binary relation symbol $\in$, whose first coordinate corresponds to the first sort and whose second coordinate corresponds to the second sort. When interpreting a set system $(X,\mathcal{S})$ as an $L$-structure, $\in$ is given its usual meaning, ``is an element of". We shall first prove a weaker version of Theorem \ref{definable pq} from which it will be easy to deduce the full result.

\begin{proposition}\label{weak definable pq}
Let $q\leq p$ be positive integers. There exist positive integers $k, n_1,...,n_k$ and first-order $L$-formulas $\psi_{i,j}(y,w)$, for $i\in\{1,...,k\}$ and $j\in \{1,...,n_i\}$, such that, for any non-empty set $X$ and any non-empty set $\mathcal{S}$ of non-empty subsets of $X$, if $\text{VC}^*(X,\mathcal{S})<q$ and $(X,\mathcal{S})$ has the $(p,q)$-property then there exist $i\in \{1,...,k\}$ and tuples $d_1,...,d_{n_i}$ from the structure $(X,\mathcal{S})$ such that the following is satisfied. Each element of $\mathcal{S}$ realises $\psi_{i,1}(y,d_1)\vee...\vee\psi_{i,n_i}(y,d_{n_i})$ and, for each $j\in\{1,...,n_i\}$, if $\mathcal{S}_j$ is the subset of $\mathcal{S}$ defined in $(X,\mathcal{S})$ by $\psi_{i,j}(y,d_j)$ then every non-empty finite subset of $\mathcal{S}_j$ has non-empty intersection.
\end{proposition}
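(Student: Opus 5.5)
We argue by contradiction and compactness, using Theorem \ref{mt version} (applied to the formula $x\in y$ in the two-sorted language $L$) together with Theorem \ref{infinite pq}.

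First we set up a first-order theory. Let $\Sigma$ be the $L$-theory saying that every element of the second sort is non-empty. It also says that two elements of the second sort with the same members are equal, and that the dual VC-dimension of $\{y : y \text{ of the second sort}\}$ is $<q$; this is expressible by a single sentence forbidding $q$ sets with all $2^q$ Venn regions non-empty. Finally, $\Sigma$ expresses the $(p,q)$-property by a single sentence. Models of $\Sigma$ with extensionality are exactly the set systems in question. Note that in any model of $\Sigma$ the formula $x\in y$ has NIP with respect to the complete theory.

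Suppose the proposition fails. Enumerate all finite families $(\psi_{j}(y,w_j))_{j\le n}$ of $L$-formulas as $F_1,F_2,\dots$. For each $i$, the failure for $F_1,\dots,F_i$ gives a set system $(X_i,\mathcal{S}_i)$ satisfying $\Sigma$ such that no $F_l$ with $l\le i$ works, under any choice of parameters. "$F_l$ works with parameters $d$" is a first-order condition except for the clause "every finite subset of $\mathcal{S}_j$ has non-empty intersection". That clause is the infinite conjunction over $m$ of the sentences $\chi_{l,j,m}(d_j)$ saying that any $m$ elements of $\psi_j(\cdot,d_j)$ have a common point. So the failure for $F_l$ is the statement that for all $d$, either the disjunction does not cover, or for some $j$ some $\chi_{l,j,m}$ fails. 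This is not first-order, so plain compactness cannot be applied directly. Instead we take an ultraproduct, or better, reason as follows.

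By Theorem \ref{infinite pq}, each $\mathcal{S}_i$ is a union of $n=n(p,q)$ finitely consistent pieces. Expand $L$ by unary predicates $P_1,\dots,P_n$ on the second sort and interpret them as these pieces. Let $M$ be a non-principal ultraproduct of the expanded structures, let $M_0$ be its $L$-reduct, and pass to $M_0\prec^*N$. The pieces $P_j(M)$ are still finitely consistent, since this is a first-order scheme holding in each factor, and they cover. Hence for every $b\in\mathcal{S}(N)$ of the second sort... Here the intended route is to apply Theorem \ref{mt version} with $M=M_0$. For each $b\in N$, $\operatorname{tp}(b/M_0)$ is finitely satisfiable in some single $P_j(M)$ (by coverage and ultrafilter reasoning on finitely satisfiable types), so $x\in b$ does not divide over $M_0$. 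Theorem \ref{mt version} then yields $\psi_b\in\operatorname{tp}(b/M_0)$ with $\{x\in b':b'\in\psi_b(M_0)\}$ realised in $N$. Consequently every finite subset of $\psi_b(M_0)$ has a common point in $M_0$, by elementarity. By compactness of the type space, finitely many such $\psi_{b_1}(y,e_1),\dots,\psi_{b_r}(y,e_r)$ cover the second sort of $M_0$.

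Now write $\psi_{b_t}(y,e_t)$ as $\psi_t(y,w_t)$ evaluated at the parameters $e_t\in M_0$. The family $(\psi_t)_{t\le r}$ is some $F_l$. In $M_0$ it holds that the disjunction covers and every $\chi_{l,t,m}(e_t)$ holds. By Łoś, for each fixed $m$ and for the first-order covering statement, these hold in $U$-most factors, but with parameters depending on the factor and without uniformity in $m$. This is the main obstacle: $\chi_{l,t,m}$ for all $m$ simultaneously need not transfer to a single factor. To overcome it, strengthen what we extract. Instead of merely finite consistency, we use that $\{x\in b':b'\in\psi_t(M_0,e_t)\}$ is realised by an element $a_t$ of $N$. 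Applying the argument with $N$ replaced by an ultrapower of $M_0$ (which is sufficiently saturated for the relevant types), we may take $a_t$ to be an element of the ultraproduct itself. Then the statement "$\forall y(\psi_t(y,e_t)\to a_t\in y)$" is first-order with parameters $(e_t,a_t)$. Together with covering, it transfers by Łoś to $U$-most factors $i$. In factor $i$ the family $F_l$ works, with $\mathcal{S}_t$ defined by $\psi_t(y,e_t(i))$ contained in the sets containing $a_t(i)$, hence finitely consistent. Taking $i\ge l$ contradicts the choice of $(X_i,\mathcal{S}_i)$.
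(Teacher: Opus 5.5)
There is a genuine gap in your final step, exactly at the obstacle you identified. Theorem \ref{mt version} only gives an $a_t$ in a saturated extension with $a_t\in b'$ for the parameters $b'\in\psi_t(M_0,e_t)$ from the base model. The sentence $\forall y\,(\psi_t(y,e_t)\to a_t\in y)$, evaluated in whatever model contains $a_t$, ranges over \emph{all} realisations of $\psi_t(y,e_t)$ there. If it held in any elementary extension of $M_0$, then by elementarity $M_0\models\exists x\,\forall y\,(\psi_t(y,e_t)\to x\in y)$. So every piece would have a common point in $M_0$, and $\mathcal{S}(M_0)$ would have a finite transversal.

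Nothing gives you this, and it is false in general. An ultrapower of the $\sqrt{2}$-example from the introduction has the $(3,3)$-property and $\text{VC}^*<3$, so Theorem \ref{mt version} applies to it. Yet for every $r$ the sentence $\neg\exists x_1\dots x_r\,\forall y\,\bigvee_{t\le r}x_t\in y$ holds in it and in all its elementary extensions. Passing to an ultrapower of $M_0$ cannot help, since that is again an elementary extension of $M_0$. So you still have no first-order statement certifying finite consistency of the pieces that \L o\'s can push down to the factors.

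The missing idea is to extract such a certificate from the \emph{proof} of Theorem \ref{mt version}, not from its conclusion, which is what the paper does. For each $b$ in the finite cover it keeps the auxiliary formulas $\theta_b(x,z,d)$, $\tau_b(z,d)$ and integers $p'_b\ge q'_b$, $n'_b$, and records only first-order facts:
\begin{enumerate}
\item the family $\{\theta_b(\cdot,c,d):c\in\tau_b\}$ has $\text{VC}^*<q'_b$, non-empty members and the $(p'_b,q'_b)$-property;
\item any $n'_b$ members of $\psi_b(\cdot,d)$ all contain a common $\theta_b(\cdot,c,d)$ with $c\in\tau_b$;
\item the $\psi_b(\cdot,d)$ cover the second sort.
\end{enumerate}
These transfer to $\mathcal{U}$-most factors. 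In such a factor, Theorem \ref{infinite pq} gives $a_1,\dots,a_{n'_b}$ in an elementary extension of the factor meeting every $\theta_b(\cdot,c,d_b)$. Here $n'_b$ depends only on $(p'_b,q'_b)$, and that is where the uniformity comes from. The pigeonhole argument ending the proof of Theorem \ref{mt version} then shows some $a_j$ lies in every member of $\psi_b(N_i,d_b)$, which is finite consistency in the factor.

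A smaller point concerns your non-dividing step. You infer that $x\in b$ does not divide because $\text{tp}(b/M_0)$ is finitely satisfiable in a finitely consistent $P_j(M)$, but this is not valid as stated. In $(\mathbb{Q},<)$, an interval whose endpoints both lie in the cut of $\sqrt2$ has type finitely satisfiable in the finitely consistent family of rational intervals around $\sqrt2$, yet it divides. The correct justification is Fact \ref{fact 2}, since the $(p,q)$-property and $\text{VC}^*<q$ are first-order and hold in $M_0$. This also makes the predicates $P_j$ superfluous.
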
 

\proof Suppose not. Using Theorem \ref{mt version} and the fact that the set of all finite sequences of $L$-formulas is countable, there exist positive integers $q\leq p$ and a family of set systems $\{N_i=(X_i,\mathcal{S}_i):i\in\mathbb{N}\}$ satisfying the following conditions.

\begin{enumerate}
    \item For each $i\in\mathbb{N}$, $X_i\neq\emptyset$, $\mathcal{S}_i\neq\emptyset$, each element of $\mathcal{S}_i$ is non-empty, $(X_i,\mathcal{S}_i)$ has the $(p,q)$-property and $\text{VC}^*(X_i,\mathcal{S}_i)<q$.
    \item For each finite sequence of $L$-formulas $\psi_1(y,w),...,\psi_n(y,w)$, there is an integer $m$ such that, for all $i\geq m$, there do not exist parameters $d_1,...,d_n$ from $(X_i,\mathcal{S}_i)$ such that both \begin{enumerate} \item $\mathcal{S}_i=\psi_1(N_i,d_1)\cup...\cup\psi_n(N_i,d_n)$ and \item for each $j\in\{1,...,n\}$, every non-empty finite subset of $$\{b\in \mathcal{S}_i:N_i\models\psi_j(b,d_j)\}$$ has non-empty intersection. \end{enumerate}
\end{enumerate}

Let $\mathcal{U}$ be a non-principal ultrafilter on $\mathbb{N}$ and let $(X,\mathcal{S})$ be the ultraproduct of $(X_i,\mathcal{S}_i)_{i\in\mathbb{N}}$ with respect to $\mathcal{U}$. Let $\varphi(x,y)$ be the $L$-formula $x\in y$ and let $N=(X,\mathcal{S})$. By \L o\'s's lemma, $X\neq\emptyset$, $\mathcal{S}\neq\emptyset$, $\varphi(N,b)$ is non-empty for each $b\in\mathcal{S}$, the set system $(X,\{\varphi(N,b):b\in\mathcal{S}\})$ has the $(p,q)$-property and $\text{VC}^*(X,\{\varphi(N,b):b\in\mathcal{S}\})<q$. 

Let $M\prec N$ such that $M$ is countable. By Fact \ref{fact 2}, for each $b\in\mathcal{S}$, $\varphi(x,b)$ does not divide over $M$. For each $b\in\mathcal{S}$, let $\psi_b(y,d)$, $\theta_b(x,z,d)$ and $\tau_b(z,d)$ be the formulas $\psi(y),\theta(x,z)$ and $\tau(z)$ from the proof of Theorem \ref{mt version}. We have made the parameters $d\in M$ explicit so that $\psi_b(y,w), \theta_b(x,z,w)$ and $\tau_b(z,w)$ have no parameters. Let $p'_b,q'_b$ and $n'_b$ be the numbers $p', q'$ and $n'$ from the proof of Theorem \ref{mt version}, respectively. By the $\aleph_1$-saturation of the ultraproduct $(X,\mathcal{S})$, there is a finite $B\subseteq\mathcal{S}$ such that $\mathcal{S}=\bigcup\limits_{b\in B}\psi_b(N)$. While the sort of the parameter variable $w$ depends on $b$, we may assume that it remains constant as $b$ ranges over the finite set $B$.

By \L o\'s's lemma and reasoning given in the proof of Theorem \ref{mt version}, there is some $V\in\mathcal{U}$ such that, for each $i\in V$ and $b\in B$, there exists $d_b\in N_i$ such that

\begin{enumerate} 
\item $p_b'\geq q_b'> \text{VC}^*(X_i,\{\theta_b(N_i,c,d_b):c\in\tau_b(N_i,d_b)\})$,
\item for each $c\in\tau_b(N_i,d_b)$, $\theta_b(N_i,c,d_b)\neq\emptyset$,
\item $(X_i,\{\theta_b(N_i,c,d_b):c\in\tau_b(N_i,d_b)\})$ has the $(p_b',q_b')$-property,
\item for any $b_1'',...,b_{n_b'}''\in \psi_b(N_i,d_b)$ there exists $c\in \tau_b(N_i,d_b)$ such that, for all $a\in X_i$, if $N_i\models\theta_b(a,c,d_b)$ then $N_i\models \varphi(a,b_1'')\wedge ...\wedge \varphi(a, b_{n_b'}'')$, and
\item $\mathcal{S}_i=\bigcup\limits_{b\in B}\psi_b(N_i,d_b)$.
\end{enumerate} 

For each $i\in V$ and $b\in B$, we obtain $a_1,...,a_{n_b'}$ in an elementary extension $U_i$ of $N_i$ such that, for each $c\in \tau_b(N_i,d_b)$, $U_i\models \theta_b(a_1,c,d_b)\vee ...\vee\theta_b(a_{n_b'},c,d_b)$. As in the proof of Theorem \ref{mt version}, it follows that, for some $j\in\{1,...,n_b'\}$, $U_i\models \varphi(a_j,b'')$ for all $b''\in \psi_b(N_i,d_b)$. So, for each $i\in V$ and $b\in B$, we have that every non-empty finite subset of  $\{\varphi(N_i,b''):b''\in\psi_b(N_i,d_b)\}$ has non-empty intersection. Since $V$ is infinite, this is a contradiction. \endproof

Theorem \ref{definable pq} is the strengthening of Proposition \ref{weak definable pq} in which $k=1$. We obtain this by using, as the formulas $\psi_1,...,\psi_n$ in the statement of Theorem \ref{definable pq}, conjunctions of formulas from the statement of Proposition \ref{weak definable pq}. In particular, we use all conjunctions of the form $\psi_{1,j_1}\wedge...\wedge\psi_{k,j_k}$, where, for each $i\in\{1,...,k\}$, $j_i\in\{1,...,n_i\}$. 

\subsection{Proof of Theorem \ref{refined version}} Let $T$ be a complete $L$-theory and let $M\prec^* N\prec^* U\models T$. Let $t(x)$ be a deductively closed partial type over $N$ such that, for every $\varphi(x,y)$, $\{b\in N:\varphi(x,b)\in t(x)\}$ is definable over $M$. As in the statement of Theorem \ref{refined version}, let $A\subseteq U^{|x|}$ be the set of realisations of $t(x)$. 

\begin{lemma}\label{A lemma}
    For every formula $\varphi(x,y)$ over $M$, there is a formula $\psi(y)$ over $M$ such that, for all $b\in N$, $\varphi(U,b)\cap A\neq\emptyset$ if and only if $b\in \psi(N)$. 
\end{lemma}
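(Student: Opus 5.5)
The plan is to express non-emptiness of $\varphi(U,b)\cap A$ through membership of $\neg\varphi(x,b)$ in $t$, and then use the definability of $t$ over $M$. Fix $\varphi(x,y)$ over $M$; adding its parameters from $M$ to the $y$-variables if necessary, we may assume it is an $L$-formula over $M$ in the sense required by the hypothesis on $t$.

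The first step is the key equivalence: for $b\in N$, $\varphi(U,b)\cap A\neq\emptyset$ if and only if $\neg\varphi(x,b)\notin t(x)$. Suppose $\neg\varphi(x,b)\in t$. Then every realisation of $t$ satisfies $\neg\varphi(x,b)$, so $\varphi(U,b)\cap A=\emptyset$. Conversely, suppose $\neg\varphi(x,b)\notin t$. Then $t\cup\{\varphi(x,b)\}$ is consistent. Otherwise, by compactness, some finite conjunction $\chi$ of formulas of $t$ would satisfy $\chi\vdash\neg\varphi(x,b)$, and since $t$ is deductively closed we would get $\neg\varphi(x,b)\in t$. Here deductive closure should be read relative to $\text{Th}(N,N)$, i.e. closed under consequence modulo the elementary diagram of $N$; this is the only point where I must be careful about the convention. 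Now $t\cup\{\varphi(x,b)\}$ is a consistent partial type over $N$, a set of size less than the saturation of $U$, so it is realised in $U$. Its realisation lies in $\varphi(U,b)\cap A$.

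The second step applies the hypothesis of Theorem \ref{refined version} to the formula $\neg\varphi(x,y)$. This gives a formula $\chi(y)$ over $M$ such that $\{b\in N:\neg\varphi(x,b)\in t\}=\chi(N)$. Setting $\psi(y)=\neg\chi(y)$, the first step shows that for $b\in N$ we have $\varphi(U,b)\cap A\neq\emptyset$ if and only if $b\in\psi(N)$, which is the claim.

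I expect no real obstacle. The only delicate point is the compactness/deductive-closure argument in the first step: one has to use that consequence is taken over $N$, and that $U$ is sufficiently saturated over $N$ (as $N\prec^*U$) to realise $t\cup\{\varphi(x,b)\}$.
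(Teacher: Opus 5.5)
Your proposal is correct and follows the same route as the paper: apply the definability hypothesis to $\neg\varphi(x,y)$ and use deductive closure of $t$ to get that $\neg\varphi(x,b)\in t(x)$ if and only if $A\cap\varphi(U,b)=\emptyset$, then take $\psi=\neg\chi$. The only difference is that you spell out the compactness and saturation argument behind the deductive-closure step, which the paper leaves implicit.
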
 

\proof By the assumptions on $t(x)$, there is a formula $\psi'(y)$ over $M$ such that, for all $b\in N$, $\neg\varphi(x,b)\in t(x)$ if and only if $b\in\psi'(N)$. Since $t(x)$ is deductively closed, we then have that $A\subseteq U^{|x|}\setminus\varphi(U,b)$ if and only if $b\in \psi'(N)$. It follows that $\varphi(U,b)\cap A\neq\emptyset$ if and only if $b\in N^{|y|}\setminus\psi'(N)$. So we may choose $\psi$ to be $\neg\psi'$. \endproof

Using this, together with modified versions of Facts \ref{fact 1} and \ref{fact 2}, it is easy to adapt the proof of Theorem \ref{mt version} to obtain a proof of Theorem \ref{refined version}. The modified versions of the facts are as follows. 

\begin{proposition}\label{fact 3}
    Let $\theta(x,z)$ have NIP and let $c\in N^{|z|}$ have the property that, for every sequence $(c_n)_{n\in\mathbb{N}}$ of realisations of $\text{tp}(c/M)$ in $N$ that is indiscernible over $M$, $\{\theta(x,c_n):n\in\mathbb{N}\}$ is realised in $A$. Then there is a formula $\tau\in \text{tp}(c/M)$ such that, for every sequence $(c_n)_{n\in\mathbb{N}}$ in $\tau(N)$ that is indiscernible over $M$, $\{\theta(x,c_n):n\in\mathbb{N}\}$ is realised in $A$.
\end{proposition}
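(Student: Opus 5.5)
The plan is to argue by contradiction with a compactness argument. The step that needs NIP is a uniform bound on the length of a finite failure in $A$. Write $r=\text{tp}(c/M)$. First I translate the hypothesis and the conclusion into first-order conditions over $M$. For each $k$, apply Lemma \ref{A lemma} to the formula $\theta(x,z_1)\wedge\dots\wedge\theta(x,z_k)$. This gives a formula $\chi_k(z_1,\dots,z_k)$ over $M$ such that, for $c_1,\dots,c_k\in N$, $\bigwedge_{i\le k}\theta(U,c_i)\cap A\neq\emptyset$ if and only if $N\models\chi_k(c_1,\dots,c_k)$.

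Since $A$ is the set of realisations of a deductively closed partial type $t$ and $U$ is saturated, a small set of formulas over $N$ is realised in $A$ if and only if every finite conjunction of it meets $A$. Hence, for a sequence $(c_n)_{n\in\mathbb{N}}$ indiscernible over $M$, the set $\{\theta(x,c_n):n\in\mathbb{N}\}$ is realised in $A$ if and only if $N\models\chi_k(c_1,\dots,c_k)$ for every $k$. So the hypothesis says: every $M$-indiscernible sequence of realisations of $r$ satisfies all the $\chi_k$ on its initial segments.

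Now suppose the conclusion fails. Then for each $\tau\in r$ there is an $M$-indiscernible sequence $(c^\tau_n)$ in $\tau(N)$ and some $k_\tau$ with $N\models\neg\chi_{k_\tau}(c^\tau_1,\dots,c^\tau_{k_\tau})$. If the $k_\tau$ were bounded by a single $K$, we could finish by compactness. Since $\chi_K(z_1,\dots,z_K)$ implies $\chi_k(z_1,\dots,z_k)$ for $k\le K$, the following is finitely satisfiable: $r(z_n)$ for all $n$, plus $M$-indiscernibility of $(z_n)$, plus $\neg\chi_K(z_1,\dots,z_K)$. A realisation in $N$ would be an $M$-indiscernible sequence of realisations of $r$ not realised in $A$, contradicting the hypothesis.

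The main obstacle is therefore to get a bound $K$, depending only on $\theta$ (and $A$), such that for any $M$-indiscernible $(c_n)$ with each $\theta(x,c_n)$ meeting $A$, if $\chi_K(c_1,\dots,c_K)$ holds then $\chi_k(c_1,\dots,c_k)$ holds for all $k$. This is the relativised analogue of the NIP fact underlying Fact \ref{fact 1}: an indiscernible sequence that is $K$-consistent for large enough $K$ is consistent. I would try to prove it by adapting the alternation argument of Propositions \ref{chardividing} and \ref{refined equivalence}. Suppose $\chi_K(c_1,\dots,c_K)$ holds with $K$ large compared with the alternation number of $\theta$. By indiscernibility, every $K$-element subset of the sequence has a common solution in $A$. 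Stretch the sequence and pick $a\in A$ realising $\theta$ on a long block of positions. Because $A$ is type-definable over $N$ by an $M$-invariant type, the choice of such $a$ can be moved around by automorphisms over $M$ fixing $A$ setwise. I would then try to show that a failure at some later $k$ forces $a$ to produce more than the permitted number of alternations of $\theta(a,c_n)$.

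If that direct route stalls, the fallback is the following. By Proposition \ref{refined equivalence}, the hypothesis is equivalent to $\{\theta(x,c'):c'\in r(N)\}$ being realised in $A$. So for each $k$ the formula $\chi_k$ holds on all of $r(N)^k$. The task is then to find a single $\tau$ that implies all the $\chi_k$ along indiscernible sequences. Here one would use a relativised version of the $(p,q)$/VC bound of Fact \ref{fact 2} for the family $\{\theta(U,c)\cap A\}$, which has finite dual VC-dimension because $\theta$ has NIP. That would give the uniform $K$ needed in the compactness step above.
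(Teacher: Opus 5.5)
Your reduction is correct and is essentially the second half of the paper's proof. The formulas $\chi_k$ from Lemma \ref{A lemma}, the observation that (by saturation of $U$) realisability in $A$ of $\{\theta(x,c_n):n\in\mathbb{N}\}$ is decided by the $\chi_k$ on initial segments, and the compactness argument once a uniform $K$ is known are all fine. The paper runs the same compactness with a formula from the type of $M$-indiscernibility in $q$ variables.

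However, the uniform bound is the whole content of the proposition and the only place NIP is used, and you do not establish it. Your ``direct route'' stops exactly where the work is: it does not say why a late failure should force too many alternations. Your fallback only relocates the problem. The relativised Fact \ref{fact 2} you would need is the direction (2)$\Rightarrow$(1) of Proposition \ref{fact 4}, and that direction is this same bound in another form; the paper proves it by appeal to the argument for the present proposition. So there is a genuine gap at the key step.

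The missing idea is to look at the least length of failure. Let $q>\text{VC}^*(U^{|x|},\{\theta(U,c):c\in N^{|z|}\})$, which is finite since $\theta$ has NIP; then $K=q$ works. Suppose $(c_n)_{n\in\mathbb{N}}$ in $N$ is $M$-indiscernible with $N\models\chi_q(c_1,\dots,c_q)$, and let $n$ be least with $N\models\neg\chi_n(c_1,\dots,c_n)$, so $n>q$.

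Because $\chi_{n-1}$ and $\chi_n$ are over $M$, indiscernibility gives the following: every $(n-1)$-element subset of $\{\theta(x,c_j):j\in\mathbb{N}\}$ has a common solution in $A$, while no $n$-element subset does. Hence a solution in $A$ of an $(n-1)$-element subset satisfies $\theta(x,c_j)$ for exactly the $j$ in that subset.

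Now take any $S\subseteq\{1,\dots,n-1\}$. Pad $S$ with indices $\ge n$ to an $(n-1)$-element set and take such a solution. It lies in $\theta(U,c_j)$, for $j\le n-1$, exactly when $j\in S$. So every Venn field of $\theta(U,c_1),\dots,\theta(U,c_{n-1})$ is non-empty. This gives $n-1\le\text{VC}^*<q$, contradicting $n>q$.

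This is the relativisation of Simon's Lemma 2.2 that the paper cites. With your $\chi_k$ it needs only that $\chi_k$ is over $M$, not the automorphism fixing $N$ setwise that the paper uses.
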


\proof It is known that, for an indiscernible sequence $(c_n)_{n\in\mathbb{N}}$ in $N$, $\{\theta(x,c_n):n\in\mathbb{N}\}$ is consistent if and only if $\{\theta(x,c_n):n=1,...,q\}$ is consistent, where $q$ is any number greater than $\text{VC}^*(U^{|x|},\{\theta(U,c):c\in N^{|z|}\})$. A proof is given in \cite{Simon2015} (Lemma 2.2 there). The same reasoning can be used to show that $\{\theta(x,c_n):n\in\mathbb{N}\}$ is realised in $A$ if and only if $\{\theta(x,c_n):n=1,...,q\}$ is realised in $A$, where the sequence $(c_n)_{n\in\mathbb{N}}$ in $N$ is assumed to be indiscernible over $M$, except that we use an automorphism that fixes $N$ setwise and $M$ pointwise and appeal to the fact that $t(x)$ is invariant over $M$.

We can then adapt the proof of Corollary 2.3 in \cite{Simon2015} as follows. Let $j(z_1,...,z_q)$ be the partial type over $M$ asserting that any realisation is indiscernible over $M$. By Lemma \ref{A lemma} and compectness, there are formulas $\beta(z_1,...,z_q)\in j$ and $\alpha(z)\in\text{tp}(c/M)$ such that, for each $(c_1,...,c_q)\in N^{q|z|}$ realising $\beta(z_1,...,z_q)\wedge\alpha(z_1)\wedge...\wedge \alpha(z_q)$, $\{\theta(x,c_n):n=1,...,q\}$ is realised in $A$. We may then choose $\tau(z)$ to be $\alpha(z)$. Then, for any sequence $(c_n)_{n\in\mathbb{N}}$ in $\tau(N)$ that is indiscernible over $M$, $\{\theta(x,c_n):n\in\mathbb{N}\}$ is realised in $A$. \endproof

\begin{proposition}\label{fact 4} 
    Let $T$ be a complete theory and $M\prec^*N\models T$. Suppose $\theta(x,z)$ has NIP with respect to $T$ and $\tau(z)$ is a formula over $M$. The following are equivalent.
    \begin{enumerate}
        \item For every sequence $(c_n)_{n\in\mathbb{N}}$ in $\tau(N)$ that is indiscernible over $M$, $\{\theta(x,c_n):n\in\mathbb{N}\}$ is realised in $A$.
        \item The set system $(A,\{\theta(U,c):c\in\tau(N)\})$ has the $(p,q)$-property, for some $p\geq q>\text{VC}^*(A,\{\theta(U,c):c\in\tau(N)\})$, and, for every $c\in\tau(N)$, $\theta(U,c)\cap A\neq\emptyset$.
    \end{enumerate}
\end{proposition}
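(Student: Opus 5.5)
The plan is to adapt the proof of Lemma 2.4 in \cite{Simon2015}, which underlies Fact \ref{fact 2}, replacing ``consistent'' by ``realised in $A$'' throughout. Two tools from the proof of Proposition \ref{fact 3} will do the work. First, for a sequence in $N$ indiscernible over $M$, $\{\theta(x,c_n):n\in\mathbb{N}\}$ is realised in $A$ if and only if its first $q$ instances are, for any $q$ exceeding the dual VC-dimension. Second, $t(x)$ is invariant over $M$, so automorphisms of $U$ fixing $M$ pointwise and $N$ setwise preserve $A$. Throughout, $d$ will denote $\text{VC}^*(A,\{\theta(U,c):c\in\tau(N)\})$, which is at most the dual VC-dimension of $\theta$ and hence finite.

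For (1)$\Rightarrow$(2), non-emptiness of $\theta(U,c)\cap A$ for $c\in\tau(N)$ follows by applying (1) to a constant sequence $c_n=c$. Next set $q=d+1$ and argue by contradiction. Suppose that for every $p$ there are $c_1,\dots,c_p\in\tau(N)$ such that no $q$ of the sets $\theta(U,c_{i_1}),\dots,\theta(U,c_{i_q})$ meet inside $A$. By Lemma \ref{A lemma}, the condition ``$\bigwedge_{l\le q}\theta(x,z_{l})$ meets $A$'' is definable over $M$ for $z$-tuples from $N$; call the defining formula $\chi(z_1,\dots,z_q)$. So for each $p$ there are $p$ elements of $\tau(N)$ no $q$-subset of which satisfies $\chi$. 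Ramsey's theorem and compactness, using saturation of $N$, then give an $M$-indiscernible sequence in $\tau(N)$ along which $\neg\chi$ holds on all increasing $q$-tuples. This contradicts (1) and yields the $(p,q)$-property for some $p$.

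For (2)$\Rightarrow$(1), take an $M$-indiscernible $(c_n)$ in $\tau(N)$ and suppose $\{\theta(x,c_n)\}$ is not realised in $A$. By the first tool, some $q$-subfamily, with $q>d$ fixed as in (2), has no common point in $A$. By indiscernibility together with Lemma \ref{A lemma}, which makes ``meets $A$'' an $M$-definable property of the parameters, every increasing $q$-subfamily then fails to meet in $A$. The first $p$ terms of the sequence therefore violate the $(p,q)$-property of $(A,\{\theta(U,c):c\in\tau(N)\})$. Non-emptiness of each $\theta(U,c_n)\cap A$ is needed only to cover the degenerate cases with $q=1$.

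The main obstacle is the first tool, the reduction to the first $q$ instances relative to $A$. If $\{\theta(x,c_n):n\le q\}$ is realised in $A$ but the whole family is not, compactness within the realisations of $t$ gives a finite set $F$ with $\bigwedge_{n\in F}\theta(x,c_n)$ not meeting $A$. One then has to shatter $q$ of the sets inside $A$ to contradict $q>d$. The idea is to stretch the sequence and, via indiscernibility, choose points of $A$ realising exactly a prescribed pattern among $q$ chosen indices. Moving such witnesses requires automorphisms over $M$ that fix $N$ setwise, which preserve $A$ by invariance of $t$; that is, one must arrange the stretched indices inside $N$. I would follow Lemma 2.2 of \cite{Simon2015} verbatim here, as the paper indicates, checking only that each witness is kept inside $A$.
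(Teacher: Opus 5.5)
Your route is the paper's. In (1)$\Rightarrow$(2) you use Lemma \ref{A lemma} to make ``$\bigwedge_{l\le q}\theta(x,z_l)$ meets $A$'' an $M$-definable condition and extract an $M$-indiscernible sequence in $\tau(N)$ by Ramsey and compactness. In (2)$\Rightarrow$(1) you reduce to $q$ instances and spread the failure along the sequence by indiscernibility. The first direction is fine.

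There is a genuine gap in (2)$\Rightarrow$(1), at ``the first $p$ terms of the sequence therefore violate the $(p,q)$-property''. The $(p,q)$-property is a condition on $p$ \emph{elements} of the set system, that is, $p$ distinct sets. So you need $\theta(U,c_1)\cap A,\dots,\theta(U,c_p)\cap A$ to be pairwise distinct, and nothing in your argument guarantees this. Your remark that non-emptiness is needed only when $q=1$ is wrong. Suppose $\theta(U,c)\cap A=\emptyset$ for every $c\in\tau(N)$. Then the set system has a single member and its dual VC-dimension is $0$, so the $(2,2)$-property holds vacuously. Yet (1) fails, for instance for a constant sequence. So the second clause of (2) is exactly what rules out your sequence collapsing to a single set, and this matters for every $q$, not just $q=1$.

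The fix is a case distinction the paper makes (phrased for the sets $\theta(U,c_n)$ themselves) before running the argument you give. By Lemma \ref{A lemma}, ``$\theta(x,c_n)\triangle\theta(x,c_m)$ is not realised in $A$'' is an $M$-definable condition on $(c_n,c_m)$. Hence, by indiscernibility, the sets $\theta(U,c_n)\cap A$ are either all equal or pairwise distinct. If they are all equal, each equals $\bigcap_n\theta(U,c_n)\cap A=\emptyset$, since the family is not realised in $A$. This contradicts the second clause of (2). If they are pairwise distinct, the first $p$ terms really are $p$ members of the system, and your argument goes through.
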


\proof We adapt the proof of Lemma 2.4 in \cite{Simon2015}. 

Assume (2) is false. We may assume $\theta(U,c)\cap A\neq\emptyset$ for all $c\in\tau(N)$. Let $q>\text{VC}^*(A,\{\theta(U,c):c\in\tau(N)\})$. By Lemma \ref{A lemma}, there is a formula $\gamma(z_1,...,z_q)$ over $M$ such that, for all $c_1,...,c_q\in \tau(N)$, $\theta(x,c_1)\wedge...\wedge\theta(x,c_q)$ is realised in $A$ if and only if $(c_1,...,c_q)\in \gamma(N)$. For each $p\geq q$, $(A,\{\theta(U,c):c\in\tau(N)\})$ does not have the $(p,q)$-property. Therefore there are subsets of $\tau(N)$ of arbitrary finite size such that each $q$-tuple of distinct elements from such a subset realises $\neg\gamma$. By a standard application of Ramsey's theorem and compactness, there is a sequence $(c_n)_{n\in\mathbb{N}}$ in $\tau(N)$ that is indiscernible over $M$ and such that $(c_1,...,c_q)\in (\neg\gamma)(N)$ and so $\theta(z,c_1)\wedge...\wedge\theta(x,c_q)$ is not realised in $A$. Therefore (1) is false.

Now assume (1) is false. Let $(c_n)_{n\in\mathbb{N}}$ be a sequence in $\tau(N)$ that is indiscernible over $M$ and such that $\{\theta(z,c_n):n\in\mathbb{N}\}$ is not realised in $A$. We may assume $\theta(U,c_n)\neq\theta(U,c_m)$ whenever $n\neq m$, since otherwise all the sets in $\{\theta(U,c_n):n\in\mathbb{N}\}$ are the same and the second clause of (2) is immediately false. Let $q>\text{VC}^*(A,\{\theta(U,c):c\in\tau(N)\})$. As in the proof of Proposition \ref{fact 3}, it follows that $\theta(x,c_1)\wedge...\wedge\theta(x,c_q)$ is not realised in $A$. Since $t(x)$ is invariant over $M$, no $q$-element subset of $\{\theta(x,c_n):n\in\mathbb{N}\}$ is realised in $A$. Therefore $(A,\{\theta(U,c):c\in\tau(N)\})$ cannot have the $(p,q)$-property, for any $p\geq q>\text{VC}^*(A,\{\theta(U,c):c\in\tau(N)\})$. So (2) is false. \endproof

The rest of the proof of Theorem \ref{refined version} is almost exactly copied from the proof of Theorem \ref{mt version}. It is only the justification of (5) and (6) that requires a significant amount of new reasoning. 

Let $\varphi(x,y)$ be a first-order $L$-formula that has NIP with respect to $T$. Let $b\in N$ and suppose $\{\varphi(x,b'):b'\in N\text{ and }b'\models\text{tp}(b/M)\}\cap A\neq\emptyset$. Let $s(y)$ be a strictly invariant extension of $\text{tp}(b/M)$ to $N$ such that $s$ is finitely realised in $M$. Let $(b_n)_{n\in\mathbb{N}}$ be a Morley sequence for $s$ over $M$ in $N$. Let $(b'_n)_{n\in\mathbb{N}}$ be an indiscernible sequence in $N$ over $M$ which has the same EM-types over $M$ as the sequence $(...,b_3,b_2,b_1)$. By assumption, $\{\varphi(x,b'_n):n\in\mathbb{N}\}$ is realised in $A$.

Let $a\in A$ be such that the truth value of $\varphi(a,b'_n)$ alternates as many times as possible, subject to the constraint that $U\models\varphi(a,b'_n)$ eventually. We may assume there exists $m\in\mathbb{N}$ such that $U\models\varphi(a,b'_n)\leftrightarrow\neg\varphi(a,b'_{n+1})$, for all $n\in\{1,...,m-1\}$, and $U\models\varphi(a,b'_n)$, for all $n\geq m$. By compactness, there is some $k$ such that

\begin{enumerate}

\item there exists $a\in A$ such that $$U\models\bigwedge\limits_{n\in\{1,...,m-1\}}(\varphi(a,b'_n)\leftrightarrow\neg\varphi(a,b'_{n+1}))\wedge\varphi(a,b'_m)\wedge\varphi(a,b'_{m+2})\wedge...\wedge\varphi(a,b'_{m+k+1}),$$

\item for all $a$ as in (1), $U\models\varphi(a,b'_{m+1})$.
 
\end{enumerate}

Therefore, for the Morley sequence $(b_n)_{n\in\mathbb{N}}$, we have that

\begin{enumerate}

\item[(3)] there exists $a\in A$ such that $$U\models\varphi(a,b_1)\wedge...\wedge\varphi(a,b_k)\wedge\varphi(a,b_{k+2})\wedge\bigwedge\limits_{n\in\{k+2,...,k+m\}}(\varphi(a,b_n)\leftrightarrow\neg\varphi(a,b_{n+1})),$$

\item[(4)] for all $a$ as in (3), $U\models\varphi(a,b_{k+1})$.

\end{enumerate}

We shall show that there exist $d_{k+2},....,d_{k+m+1}\in M$ such that 

\begin{enumerate}

\item[(5)] there exists $a\in A$ such that $$U\models\varphi(a,b_1)\wedge...\wedge\varphi(a,b_k)\wedge\varphi(a,d_{k+2})\wedge\bigwedge\limits_{n\in\{k+2,...,k+m\}}(\varphi(a,d_n)\leftrightarrow\neg\varphi(a,d_{n+1})),$$

\item[(6)] for all $a$ as in (5), $U\models\varphi(a,b_{k+1})$.

\end{enumerate}

By Lemma \ref{A lemma}, there is a formula $\gamma(y_{k+2},...,y_{k+m+1})\in \text{tp}(b_{k+2},...,b_{k+m+1}/Mb_1...b_k)$ such that (5) holds for any $(d_{k+2},...,d_{k+m+1})\in \gamma(N)$. By (4) and compactness, there is then a formula $\gamma(y_{k+2},...,y_{k+m+1})\in \text{tp}(b_{k+2},...,b_{k+m+1}/Mb_1...b_k)$ such that (5) and (6) hold for any $(d_{k+2},...,d_{k+m+1})\in \gamma(N)$. As $\text{tp}(b_{k+2},...,b_{k+m+1}/Mb_1...b_k)$ is finitely realised in $M$, there exist $d_{k+2},....,d_{k+m+1}\in M$ such that (5) and (6) hold. 

Now let $z=(y_1,...,y_k)$ and let $\theta(x,z)$ be the formula $$\varphi(x,y_1)\wedge...\wedge\varphi(x,y_k)\wedge\varphi(x,d_{k+2})\wedge\bigwedge\limits_{n\in\{k+2,...,k+m\}}(\varphi(a,d_n)\leftrightarrow\neg\varphi(a,d_{n+1})).$$ By (5) and (6), there exists $a\in A$ such that $U\models\theta(a,b_1,...,b_k)$ and, for any such $a$, $U\models\varphi(a,b_{k+1})$. Let $\varphi'(x,y)$ be the formula $$\varphi(x,y)\wedge\varphi(x,d_{k+2})\wedge\bigwedge\limits_{n\in\{k+2,...,k+m\}}(\varphi(a,d_n)\leftrightarrow\neg\varphi(a,d_{n+1})).$$ Then $\varphi'(x,y)$ has NIP. By (5), (6) and the indiscernibility of $(b_n)_{n\in\mathbb{N}}$ over $M$, $\{\varphi'(x,b_n):n\in\mathbb{N}\}$ is realised in $A$. It follows, by Proposition \ref{refined equivalence}, that $\{\varphi'(x,b'):b'\in N\text{ and }b'\models \text{tp}(b/M)\}$ is realised in $A$. Then $$\{\theta(x,b_1',...,b_k'):b_1',...,b_k'\in N\text{ and }(b_1',...,b_k')\models\text{tp}(b_1,...,b_k)\}$$ is realised in $A$. Furthermore, $\theta(x,z)$ has NIP with respect to $T$ since $\varphi(x,y)$ does. 

By Proposition \ref{fact 3}, there is a formula $\tau(z)\in \text{tp}(b_1,...,b_k/M)$ such that, for any sequence $(c_n)_{n\in \mathbb{N}}$ in $\tau(N)$ that is indiscernible over $M$, $\{\theta(x,c_n):n\in\mathbb{N}\}$ is realised in $A$. Then, by Proposition \ref{fact 4}, the set system $$(A,\{\theta(U,b''_1,...,b''_k):(b''_1,...,b''_k)\in\tau(N)\})$$ has the $(p',q')$-property for some $$p'\geq q'> \text{VC}^*(A,\{\theta(U,b''_1,...,b''_k):(b''_1,...,b''_k)\in\tau(N)\}).$$ By Theorem \ref{infinite pq}, there is a positive integer $n'$, depending only on $(p',q')$, such that there exist $a_1,...,a_{n'}\in A$ such that, for all $c\in \tau(N)$, $U\models\theta(a_i,c)$ for some $i\in\{1,...,n'\}$. 

Using the fact that $(b_n)_{n\in\mathbb{N}}$ is a strict Morley sequence over $M$, for any finite set $B\subseteq N^{|y|}$ of realisations of $\text{tp}(b/M)$ there is an automorphism $\sigma$ of $N$ over $M$ that maps $B$ to a set of realisations of $\text{tp}(b_{k+1}/Mb_1...b_k)$. Then there exists $a\in A$ such that $U\models\theta(a,\sigma^{-1}(b_1),...,\sigma^{-1}(b_k))$ and, for all such $a$ and all $b'\in B$, $U\models \varphi(a,b')$.

By compactness, there is a formula $\psi\in \text{tp}(b/M)$ such that, for any $b''_1,...,b''_{n'}\in \psi(N)$, there exists $c\in \tau(N)$ such that, for all $a\in A$, if $U\models \theta(a,c)$ then, for all $i\in \{1,...,n'\}$, $U\models\varphi(a,b''_i)$. If for each $i\in\{1,...,n'\}$ there exists $b''_i\in\psi(N)$ such that $U\models\neg\varphi(a_i,b''_i)$ then we obtain a contradiction. Therefore, for at least one $i\in\{1,...,n'\}$, $U\models\varphi(a_i,b'')$ for all $b''\in\psi(N)$. This completes the proof of Theorem \ref{refined version}.

\section{Dense pairs}

Recall that a complete theory $T$ is geometric if its models are infinite, the quantifier $\exists^\infty$ is eliminated and the algebraic closure operator satisfies the exchange principle (so, for each $M\models T$, $(M,\text{acl})$ is a pregeometry). Throughout this section, we assume $T$ is a complete geometric theory. Let $L$ be the language of $T$ and $P$ a new unary relation symbol. Let $L_P=L\cup\{P\}$. We denote by $\text{acl}_L$ algebraic closure in the sense of $L$. 

\begin{definition}\label{dense pair}
A dense pair of models of $T$ is an $L_P$-structure $(M,P(M))$ such that $P(M)\prec M$ as $L$-structures and, for every infinite $X\subseteq M$ such that $X$ is definable with parameters in the $L$-structure $M$, \begin{enumerate}\item $X\cap P(M)\neq \emptyset$ and \item for every finite $B\subseteq M$, $X\nsubseteq \text{acl}_L(P(M)\cup B)$.\end{enumerate}
\end{definition}

Let $T_P$ be the common $L_P$-theory of all dense pairs of models of $T$. It was shown in \cite{AlexanderBerenstein2010} that $T_P$ is complete. It was shown in \cite{Berenstein2011} that if $T$ has NIP then so does $T_P$. However, it was shown in \cite{Hieronymi2017} that there are distal geometric theories for which the theory of dense pairs is not distal, for example the theory of real-closed ordered fields. So, after Boxall and Kestner had proved Theorem \ref{mt version} for distal theories, it was natural to investigate whether or not it was true for the theories of dense pairs of distal geometric structures. 

Rakotonarivo gave a positive answer in his thesis \cite{Rakotonarivo2021} in the case where $\text{acl}_L=\text{dcl}_L$, using a version of Theorem \ref{refined version} for distal geometric theories which was also given in \cite{Rakotonarivo2021}. While the definable $(p,q)$-theorem for such theories is a special case of Kaplan's theorem, it remains an interesting application of the ideas behind Theorem \ref{refined version}. We present the argument here, in a slightly improved form which removes the need for $\text{acl}_L=\text{dcl}_L$.

Let $(M,P(M))\prec^* (N,P(N))\prec^* (U,P(U))\models T_P$. We recall some basic facts about dense pairs. A key concept is $P$-independence.

\begin{definition}
Let $B\subseteq U$. Then $B$ is said to be $P$-independent if, for every finite $C\subseteq B$, $\dim_L(C/B\cap P(U))=\dim_L(C/P(U))$. 
\end{definition}

The $P$-type of a tuple is the information of which coordinates lie in $P(U)$ and which do not. The following was proved in \cite{AlexanderBerenstein2010} (their Lemma 2.8). 

\begin{lemma}\label{basic}
Let $a$ and $b$ be (possibly infinite) $P$-independent tuples in $U$. Then $\text{tp}_{L_P}(a)=\text{tp}_{L_P}(b)$ provided $\text{tp}_L(a)=\text{tp}_L(b)$ and the substructures generated by $a$ and $b$ have the same $P$-type. 
\end{lemma}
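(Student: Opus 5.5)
The plan is the standard back-and-forth argument, with $P$-independence supplying the extra information needed for the $L_P$-type. Let $a$ and $b$ be $P$-independent tuples with $\text{tp}_L(a)=\text{tp}_L(b)$ and with the same $P$-type. I will show that the map $a\mapsto b$ is a partial elementary map in the sense of $L_P$. It suffices to build a back-and-forth system $\mathcal{I}$ whose elements are partial maps $f:A\to B$ such that:
\begin{enumerate}
\item $A$ and $B$ are $P$-independent small subsets of $U$ (they may be infinite, but of size less than the saturation);
\item $f$ is $L$-elementary and preserves the predicate $P$.
\end{enumerate}
By saturation and strong homogeneity of $(U,P(U))$, every map in $\mathcal{I}$ then extends to an $L_P$-automorphism, which gives the equality of $L_P$-types. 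The map $a\mapsto b$ lies in $\mathcal{I}$ by hypothesis. I read the phrase ``the substructures generated have the same $P$-type'' as saying that the $P$-part is preserved; if $\text{acl}$ is meant, I first extend $f$ to $\text{acl}_L(A)$, as in step (i) below.

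The forth step is the heart of the proof: given $f:A\to B$ in $\mathcal{I}$ and $c\in U$, I must extend $f$ to $Ac$. I split into cases on $c$.
\begin{enumerate}
\item[(i)] If $c\in\text{acl}_L(A)$, I extend $f$ by any $L$-elementary map. The new point is in $P$ exactly when it lies in $\text{acl}_L(A\cap P(U))$, and this property transfers, because $P(U)$ is an elementary substructure and $A$ is $P$-independent. $P$-independence of $Ac$ is also preserved.
\item[(ii)] If $c\in P(U)\setminus\text{acl}_L(A)$, I need $d\in P(U)$ realising $f(\text{tp}_L(c/A))$. Density provides this: every infinite $L$-definable set meets $P$, so by compactness and saturation the nonalgebraic type $f(\text{tp}_L(c/A))$ is realised in $P(U)$. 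I would check that this realisation is not algebraic over $A\cup P$-data, using $P$-independence.
\item[(iii)] If $c\notin\text{acl}_L(A\cup P(U))$, I need $d$ realising $f(\text{tp}_L(c/A))$ with $d\notin\text{acl}_L(B\cup P(U))$. This comes from condition (2) of Definition \ref{dense pair} and compactness. Then $Ac$ and $Bd$ remain $P$-independent.
\item[(iv)] Otherwise $c\in\text{acl}_L(A\cup P(U))\setminus\text{acl}_L(A)$. Here I choose a finite tuple $e\subseteq P(U)$ with $c\in\text{acl}_L(Ae)$, taking $e$ minimal. I first add $e$ one element at a time using case (ii), each time exploiting the exchange principle, and then add $c$ using case (i).
\end{enumerate}
The back step is symmetric.

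I expect the main obstacle to be case (iv), and more precisely verifying that $P$-independence is preserved when the elements of $e$ are added. Choosing $e$ to be $L$-independent over $A$, and using exchange, should give $\dim_L(C/(Ae)\cap P)=\dim_L(C/P)$ for finite $C$. Since this is Lemma 2.8 of \cite{AlexanderBerenstein2010}, I would follow their argument.
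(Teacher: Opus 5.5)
Your back-and-forth argument is correct and is essentially the standard proof of this result; the paper gives no proof of its own and simply quotes Lemma 2.8 of \cite{AlexanderBerenstein2010}. Two small points: first, the ``main obstacle'' in case (iv), and the extra check you propose in case (ii), are automatic, since adjoining any elements of $P(U)$ to a $P$-independent set keeps it $P$-independent; second, you should draw the conclusion by the usual induction on $L_P$-formulas along the back-and-forth system, not by extending maps to automorphisms, because strong homogeneity only extends maps already known to be $L_P$-elementary.
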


The following lemma is an easy consequence of the definitions and the basic properties of $\text{acl}_L$-independence in geometric theories. It is essentially well known.

\begin{lemma}\label{lemma}
    Let $a$ be a finite tuple $U$. 

    \begin{enumerate}
        \item $Ma$ is $P$-independent if and only if, after reordering $a$, we have $a=efg$, where $e$ is $\text{acl}_L$-independent over $M\cup P(U)$, $f$ is a tuple in $P(U)$, $f$ is $\text{acl}_L$-independent over $P(M)$ and $g\in \text{acl}_L(Mef)$.
        
    \item In this case, the $P$-type of $g$ depends only on $\text{tp}_L(efg/M)$ together with the above-mentioned properties of $e$ and $f$. 
        \item If $Ma$ is not $P$-independent then there is a finite tuple $c\in P(U)$ such that $Mac$ is $P$-independent. 
    \end{enumerate}
\end{lemma}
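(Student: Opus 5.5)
The three items are proved from the pregeometry properties of $\text{acl}_L$ and the density axioms (Definition \ref{dense pair}), working inside $U$. Fix a finite tuple $a$. First I would choose, after reordering, a decomposition $a=efg$ as follows. Let $e$ be a maximal subtuple of $a$ that is $\text{acl}_L$-independent over $M\cup P(U)$. Then every coordinate of $a$ lies in $\text{acl}_L(M e P(U))$. Next choose a finite tuple $f'\in P(U)$, $\text{acl}_L$-independent over $Me$, such that $a\subseteq \text{acl}_L(Mef')$. This uses finite character of $\text{acl}_L$ and exchange.

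For (1), assume first that $Ma$ is $P$-independent. Taking $C$ to be $a$ together with finitely many elements of $M$, we get that $\dim_L(a/ (Ma)\cap P(U))$ equals $\dim_L(a/P(U))$ computed over $M$. Let $f$ be the coordinates of $a$ lying in $P(U)$. These together with $P(M)$ span $(Ma)\cap P(U)$ up to $\text{acl}_L$, since $P(M)\prec M$ is $\text{acl}_L$-closed in the appropriate sense; I would check this small point via the dimension count. Using exchange, pick $e$ among the remaining coordinates, independent over $M\cup P(U)$. The equality of dimensions then forces the rest, $g$, to lie in $\text{acl}_L(Mef)$. We may thin $f$ to be independent over $P(M)$ by moving dependent elements into $g$.

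Conversely, given $a=efg$ of that form, I would verify $P$-independence of $Ma$ by a dimension computation for arbitrary finite $C\subseteq Ma$. The key fact is that $M$ itself is $P$-independent, being an elementary substructure of the pair. Here I would use the standard fact that $M$ and $P(U)$ are $\text{acl}_L$-independent over $P(M)$. Adding $e$, which is generic over $M\cup P(U)$, and $f$, which lies in $P(U)$, preserves this independence. Adding $g$, which is algebraic, does not change dimensions.

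For (2), the coordinates of $g$ are algebraic over $Mef$. Whether a coordinate $g_i$ lies in $P(U)$ should be determined as follows: $g_i\in P(U)$ if and only if $g_i\in\text{acl}_L(P(M)f)$. The forward direction uses the independence of $M e$ from $P(U)$ over $P(M)f$, which follows from the hypotheses on $e$ and $f$ together with the independence of $M$ and $P(U)$ over $P(M)$. The reverse direction holds because $P(U)$ is $\text{acl}_L$-closed. Whether $g_i\in\text{acl}_L(P(M)f)$ is an $L$-type statement over $M$, so the $P$-type of $g$ is determined by $\text{tp}_L(efg/M)$. I expect this step to be the main obstacle, in particular the transitivity of independence needed to get $Me\,\text{ind}\,P(U)$ over $P(M)f$.

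For (3), take $f'\in P(U)$ as chosen at the start and put $c=f'$. Then $Mac$ decomposes as $e$, then $c$ together with the coordinates of $a$ in $P(U)$, thinned to be independent over $P(M)$, and then the remainder, which is algebraic over $Mec$. Hence $Mac$ is $P$-independent by (1).
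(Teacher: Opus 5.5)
Your proposal is correct, and it is essentially the routine verification the paper leaves out: the paper gives no proof, calling the lemma an easy consequence of the basic properties of $\text{acl}_L$-independence. Your one outside input is the standard fact that $M$ is $\text{acl}_L$-independent from $P(U)$ over $P(M)$, from which you get that $Mef$ is independent from $P(U)$ over $P(M)f$, and items (1)--(3) follow from this. Two small points to tidy up: in the forward direction of (1), $(Ma)\cap P(U)=P(M)\cup f$ holds on the nose because $M\cap P(U)=P(M)$, so no appeal to $\text{acl}_L$-closedness of $P(M)$ is needed; and in the converse, the coordinates of $g$ lying in $P(U)$ enlarge the base, which is harmless because independence from $P(U)$ over $P(M)f$ persists over any base between $P(M)f$ and $P(U)$.
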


From now on, assume $T$ has NIP. So $T_P$ does too. We present an argument for deducing Theorem \ref{mt version} for $T_P$ from Theorem \ref{refined version} for $T$. Let $\varphi(x,y)$ be an $L_P$-formula, possibly with parameters in $M$, and let $b\in N$. Suppose $\varphi(x,b)$ does not divide over $M$ (in the sense of $L_P$). Since $T_P$ has NIP, there exists $a\in U$ such that $\text{tp}_{L_P}(a/N)$ is invariant over $M$ and $(U,P(U))\models \varphi(a,b)$. By Lemma \ref{lemma}(3), we may assume $Ma$ is $P$-independent. Let $e,f$ and $g$ be as in Lemma \ref{lemma}(1).

\begin{lemma}\label{over P}
 The tuple $e$ is $\text{acl}_L$-independent over $N\cup P(U)$. 
\end{lemma}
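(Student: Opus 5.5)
We argue by contradiction, using the $M$-invariance of $\text{tp}_{L_P}(a/N)$ together with the density axioms for the pair $(N,P(N))$. Suppose $e$ is not $\text{acl}_L$-independent over $N\cup P(U)$. Since $e$ is $\text{acl}_L$-independent over $M\cup P(U)$, exchange gives a coordinate $e_1$ of $e$ and a finite tuple $h\in P(U)$ such that
\[ e_1\in\text{acl}_L(Ne'h), \]
where $e'$ lists the remaining coordinates of $e$. We choose a minimal such witness. Then there are an $L$-formula $\chi(u,v,w,z)$ over $\emptyset$ and a tuple $n\in N$ such that $\chi(e_1,e',h,n)$ holds and $\chi(u,e',h,n)$ has at most $\ell$ solutions in $u$, for some fixed $\ell$. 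Absorbing the bound into $\chi$, we may assume $\chi(u,e',h,n)$ always has at most $\ell$ solutions.

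Consider the $L_P$-formula
\[ \Theta(x,z)\;:=\;\exists w\,\bigl(P(w)\wedge \chi(x_{e_1},x_{e'},w,z)\wedge \forall^{\le\ell}u\,\chi(u,x_{e'},w,z)\bigr), \]
where $x_{e_1}$ and $x_{e'}$ denote the coordinates of $x$ corresponding to $e_1$ and $e'$. Then $\Theta(a,n)$ holds. Because $\text{tp}_{L_P}(a/N)$ is $M$-invariant, every $n'\in N$ with $\text{tp}_{L_P}(n'/M)=\text{tp}_{L_P}(n/M)$ also satisfies $\Theta(a,n')$. We now aim to find such an $n'$ for which $\Theta(a,n')$ forces $e_1\in\text{acl}_L(Me'P(U))$. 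That would contradict the independence of $e$ over $M\cup P(U)$.

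The idea is to move $n$ into a position where the witnesses $w\in P(U)$ become useless. We use the $L_P$-invariance over $M$ to realise $\text{tp}_{L_P}(n/M)$ in $N$ by many tuples $n^{(1)},n^{(2)},\dots$ that are mutually $\text{acl}_L$-independent over $M\cup P(N)$. Such tuples exist by the density axiom (2) and saturation of $N$, after first enlarging $n$ via Lemma \ref{lemma}(3) so that $Mn$ is $P$-independent, and then using Lemma \ref{basic} to control the $L_P$-type. Each $n^{(i)}$ gives $e_1\in\text{acl}_L(e'h_i n^{(i)})$ for some $h_i\in P(U)$. Choosing $h_i$ to be of minimal dimension over $Me'$ and applying exchange across the independent $n^{(i)}$, we expect that for two independent copies the algebraicity over $n^{(1)}$ and over $n^{(2)}$ can only be reconciled through $M\cup P(U)\cup e'$. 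This yields $e_1\in\text{acl}_L(Me'P(U))$, the desired contradiction.

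The main obstacle is this last step. We must find copies $n^{(i)}$ of the same $L_P$-type over $M$ that are genuinely $\text{acl}_L$-independent over $M\cup P(U)$, not merely over $M\cup P(N)$, since the witnesses $h_i$ lie in $P(U)$. We must also carry out the dimension computation that combining two such algebraic dependences forces $e_1$ into $\text{acl}_L(Me'P(U))$. To get independence over $P(U)$, I would first try using that $P(U)$ is small relative to the saturation of $U$. Alternatively, I would use that $e$ independent over $M\cup P(U)$ implies $Mea$ is $P$-independent, and then apply the characterisation in Lemma \ref{lemma}(1) to the tuple $n^{(1)}n^{(2)}$ realised in $N$.
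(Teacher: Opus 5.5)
Your set-up matches the paper's. The formula $\Theta$, together with $M$-invariance of $\text{tp}_{L_P}(a/N)$, transfers the dependence of $e$ to every $n'\in N$ with $\text{tp}_{L_P}(n'/M)=\text{tp}_{L_P}(n/M)$. The plan is then to take many such copies, independent over $MP(U)$, and count dimensions.

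The decisive step fails as you state it: two independent copies do not force $e_1\in\text{acl}_L(Me'P(U))$. Take a geometric theory in which $\text{acl}_L$ is linear span, say divisible ordered abelian groups. Let $Q=\text{acl}_L(MP(U))$ and let $e=(e_1,e_2)$ be independent over $Q$. Then $n^{(1)}=e_1+e_2$ and $n^{(2)}=e_1+2e_2$ are independent over $Q$, and $e_1\in\text{dcl}_L(e_2n^{(1)})\cap\text{dcl}_L(e_2n^{(2)})$ with empty $h_i$. Yet $e_1\notin\text{acl}_L(Qe_2)$. So what you extract from two copies, minimal $h_i$ or not, does not give your conclusion.

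What works is to take $r+1$ copies, where $r=|e|=\dim_L(e/MP(U))$, and count the dimension of the whole tuple $e$. Put $t=\dim_L(n/MP(U))$; this is preserved by $\text{tp}_{L_P}(\cdot/M)$. Additivity applied to $e_1\in\text{acl}_L(MP(U)e'n^{(i)})\setminus\text{acl}_L(MP(U)e')$ gives $\dim_L(n^{(i)}/MP(U)e)\le t-1$. If $n^{(1)},\dots,n^{(r+1)}$ are mutually independent over $MP(U)$, then $(r+1)t=\dim_L(n^{(1)}\cdots n^{(r+1)}/MP(U))\le\dim_L(e/MP(U))+\sum_i\dim_L(n^{(i)}/MP(U)e)\le r+(r+1)(t-1)$, a contradiction. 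This is the paper's computation.

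The obstacle you flag about independence over $MP(U)$ versus $MP(N)$ is left open. Neither remedy you suggest supplies it. $P(U)$ is an infinite definable set in a saturated structure, so it is not small. Lemma \ref{lemma}(1) characterises $P$-independence but does not produce copies independent over $MP(U)$.

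The missing observation is that $N$ is $P$-independent in $U$. This follows from $(N,P(N))\prec(U,P(U))$ by the same device as your $\Theta$. Suppose $c\in N$ and $c\in\text{acl}_L(BP(U))$ for a finite $B\subseteq N$, witnessed by $\chi(c,B,h)$ with $h\in P(U)$ and $\chi(u,B,h)$ having at most $\ell$ solutions. Then the $L_P(N)$-sentence $\exists w\,(P(w)\wedge\chi(c,B,w)\wedge\exists^{\le\ell}u\,\chi(u,B,w))$ holds in $U$, hence in $N$, so $c\in\text{acl}_L(BP(N))$. Thus tuples from $N$ that are mutually independent over $MP(N)$ are mutually independent over $MP(U)$. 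Density axiom (2) and saturation of $N$ supply such tuples, and this is what the paper's ``by (2) of Definition \ref{dense pair}, we may assume'' relies on.

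Finally, the paper avoids your enlargement of $n$ via Lemma \ref{lemma}(3). That enlargement adds $P$-coordinates, which would have to be kept in $P(N)$ and controlled in every copy. Instead the paper replaces $n$ by a tuple $d\in N$ that is $\text{acl}_L$-independent over $MP(U)$ with $\dim_L(e/MdP(U))<r$; a basis of $n$ over $MP(U)$ will do. Then $\text{tp}_{L_P}(d'/M)=\text{tp}_{L_P}(d/M)$ follows from $\text{tp}_L(d'/M)=\text{tp}_L(d/M)$ plus independence of $d'$ over $MP(U)$, by Lemma \ref{basic} and Lemma \ref{lemma}(2).
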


\proof Let $r=\dim_L(e/MP(U))$ and $s=\dim_L(e/NP(U))$. Suppose $s<r$. Let $d$ be a tuple in $N$ such that $\dim_L(e/MdP(U))=s$. We may assume $d$ is $\text{acl}_L$-independent over $MP(U)$. Let $(d_n)_{n\in\mathbb{N}}$ be a strict Morley sequence for $\text{tp}_L(d/M)$ in $N$. Let $t=\dim_L(d/MP(U))$. Then $\dim_L(d_{n+1}/Md_1...d_n)=t$, for all $n\in\mathbb{N}$. By (2) of Definition \ref{dense pair}, we may assume $\dim_L(d_{n+1}/Md_1...d_nP(U))=t$, for all $n\in\mathbb{N}$. Then, by Theorem \ref{basic} and Lemma \ref{lemma}(2), $\text{tp}_{L_P}(d_n/M)=\text{tp}_{L_P}(d/M)$, for all $n\in\mathbb{N}$. Since $\text{tp}_{L_P}(a/N)$ is invariant over $M$, it follows that $\dim_L(e/Md_n)=s$, for all $n\in\mathbb{N}$. By the exchange principle, $\dim_L(d_n/Me)<t$, for all $n\in\mathbb{N}$. Then $$(r+1)t=\dim_L(d_1...d_{r+1}/M)=\dim_L(d_1...d_{r+1}/Me)+\dim_L(e/M)\leq (r+1)(t-1)+r$$ which is a contradiction. The result follows. \endproof

\begin{lemma}\label{in P}
The tuple $f$ is $\text{acl}_L$-independent over $P(N)$. 
\end{lemma}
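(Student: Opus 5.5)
I will mirror the proof of Lemma \ref{over P}, now working inside the geometric structure $P(U)$ rather than modulo it. Recall from Lemma \ref{lemma}(1) that $f$ is a tuple in $P(U)$ that is $\text{acl}_L$-independent over $P(M)$. Let $r=\dim_L(f/P(M))=|f|$ and $s=\dim_L(f/P(N))$, and suppose, for a contradiction, that $s<r$.

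First I choose a witness. Pick a finite tuple $d$ in $P(N)$ with $\dim_L(f/P(M)d)=s$, and shrink it so that $d$ is $\text{acl}_L$-independent over $P(M)$. Put $t=|d|=\dim_L(d/P(M))$. Since $P(M)\prec M$ and $d\in P(N)$, and since $P$-independence lets us compute dimensions inside $P$, we have $\dim_L(d/M)=t$ as well. This uses Lemma \ref{lemma}(1) applied to the $P$-independent set $Md$; a tuple in $P$ that is independent over $P(M)$ remains independent over $M$.

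Next I build a sequence. Take a strict Morley sequence $(d_n)_{n\in\mathbb{N}}$ for $\text{tp}_{L_P}(d/M)$ in $N$. Then each $d_n$ lies in $P(N)$ and $\text{tp}_{L_P}(d_n/M)=\text{tp}_{L_P}(d/M)$. I need $\dim_L(d_{n+1}/P(M)d_1\dots d_n)=t$ for every $n$, and I will derive it from a dimension computation in $P$ via the strict invariance property (3) of Definition \ref{strict invariance}. The partial type over $M$ asserting that $y_1,\dots,y_n$ are jointly $\text{acl}_L$-independent over $P(M)$ is consistent with $r(y_1)\cup\dots\cup r(y_n)$. This is because $P(N)$ is a saturated elementary extension of $P(M)$, and by Lemma \ref{basic} together with Lemma \ref{lemma}(2), any independent realisations of $\text{tp}_L(d/P(M))$ inside $P$ have the right $L_P$-type over $M$. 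Strictness then transfers that independence to the Morley sequence, as in the proof of Lemma \ref{over P}.

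Now I use invariance. Since $\text{tp}_{L_P}(a/N)$ is $M$-invariant and $\text{tp}_{L_P}(d_n/M)=\text{tp}_{L_P}(d/M)$, we get $\dim_L(f/P(M)d_n)=s$ for all $n$. Here I use that $f$ is a subtuple of $a$, and that this dimension statement is expressible by $L_P$-formulas over $Md_n$: $P(M)\subseteq M$, and the dimension over $P(M)d_n$ equals the dimension over $Md_n$ restricted to $P$, by $P$-independence of $Ma$. By exchange, $\dim_L(d_n/P(M)f)<t$ for each $n$.

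Finally I count dimensions over $P(M)$:
$$(r+1)t=\dim_L(d_1\dots d_{r+1}/P(M))\leq\dim_L(d_1\dots d_{r+1}/P(M)f)+r\leq (r+1)(t-1)+r,$$
which is a contradiction. I expect the main obstacle to be the careful justification that dimensions over $P(M)$ and over $M$ agree for tuples in $P$, and that the relevant statements are $L_P$-expressible over $M$. Both should follow from $P$-independence, which is where Lemma \ref{lemma} is used.
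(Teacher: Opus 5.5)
Your overall plan matches the paper's: a witness $d$ in $P(N)$ independent over $P(M)$, a strict Morley sequence, $M$-invariance of $\text{tp}_{L_P}(a/N)$, exchange, and the same dimension count. However, the one step where the dense-pair axioms must enter rests on a false claim. You assert that any $\text{acl}_L$-independent realisations of $\text{tp}_L(d/P(M))$ inside $P$ have the right $L_P$-type over $M$, by Lemma \ref{basic} and Lemma \ref{lemma}(2). But Lemma \ref{basic}, applied to $Md$ and $Md'$, needs $\text{tp}_L(d'/M)=\text{tp}_L(d/M)$. For tuples in $P$, the $L$-type over $P(M)$ does not determine the $L$-type over $M$.

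Here is a counterexample in a dense pair of real closed fields. Take $m\in M\setminus P(M)$ and $d,d'\in P(N)$ with $d'<m<d$, both infinitely close to $m$ relative to $M$ (these exist by density and saturation). Then:
\begin{itemize}
\item $d$ and $d'$ have the same cut in $P(M)$, hence the same $L$-type over $P(M)$;
\item both are $\text{acl}_L$-independent over $M$;
\item yet $\text{tp}_L(d/M)\neq\text{tp}_L(d'/M)$, so their $L_P$-types over $M$ differ.
\end{itemize}
So saturation of $P(N)$ as an $L$-structure does not give independent realisations of $\text{tp}_{L_P}(d/M)$, and your consistency claim is unproved as written.

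The missing ingredient is clause (1) of Definition \ref{dense pair}, and you must work with $\text{tp}_L(d/M)$ rather than $\text{tp}_L(d/P(M))$. In the geometric theory $T$, choose realisations $d'_1,\dots,d'_n$ of $\text{tp}_L(d/M)$ with $\dim_L(d'_i/Md'_1\dots d'_{i-1})=t$. Then use clause (1) and saturation to realise each of these $L$-types inside $P$, one coordinate at a time. Each coordinate's type over the previous parameters is non-algebraic, so every formula in it defines an infinite set, which meets $P$. Now $d'_i\in P$, $Md'_i$ is $P$-independent because $M$ is, and Lemma \ref{basic} with Lemma \ref{lemma}(2) gives $\text{tp}_{L_P}(d'_i/M)=\text{tp}_{L_P}(d/M)$. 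This establishes your consistency claim. It is exactly the role of the appeal to clause (1) in the paper, where the Morley sequence is taken for the $L$-type and then placed in $P(N)$.

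With this repair the rest of your argument goes through. Two smaller points:
\begin{itemize}
\item ``Strictness transfers independence'' deserves its one-line count. If some $L$-formula $\chi(y,c)\in s$ had dimension $<t$, take $n>|c|$ realisations of $s$ that are jointly independent over $M$; these exist by Definition \ref{strict invariance}(3). They would all lie in $\chi(U,c)$, forcing $nt\le n(t-1)+|c|$, a contradiction.
\item The invariance step needs no $P$-independence: $\dim_L(f/P(M)d)$ is already determined by $\text{tp}_L(fd/M)$, since $P(M)\subseteq M$.
\end{itemize}
Taking the Morley sequence in $L_P$, as you do, is a legitimate variant. It makes $d_n\in P(N)$ and $\text{tp}_{L_P}(d_n/M)=\text{tp}_{L_P}(d/M)$ automatic, and simply moves the use of density to the consistency claim.
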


\proof The proof is similar to that of the previous lemma. Let $k=\dim_L(f/P(M))$ and $s=\dim_L(f/P(N))$. Suppose $s<k$. Let $d$ be a tuple in $P(N)$ such that $\dim_L(f/P(M)d)=s$. We may assume that $d$ is $\text{acl}_L$-independent over $P(M)$. Let $(d_n)_{n\in\mathbb{N}}$ be a strict Morley sequence for $\text{tp}_L(d/P(M))$ in $P(N)$. Let $t=\dim_L(d/P(M))$. We then have $\dim_L(d_{n+1}/P(M)d_1...d_n)=t$, for all $n\in\mathbb{N}$. By (1) of Definition \ref{dense pair}, we may assume $d_n\in P(N)$, for all $n\in\mathbb{N}$. Then, by Theorem \ref{basic} and Lemma \ref{lemma}(2), $\text{tp}_{L_P}(d_n/M)=\text{tp}_{L_P}(d/M)$, for all $n\in\mathbb{N}$. Since $\text{tp}_{L_P}(a/N)$ is invariant over $M$, it follows that $\dim_L(f/P(M)d_n)=s$, for all $n\in\mathbb{N}$. By the exchange principle, $\dim_L(d_n/P(M)f)<t$, for all $n\in\mathbb{N}$. Then $$(k+1)t=\dim_L(d_1...d_{k+1}/P(M))=\dim_L(d_1...d_{k+1}/P(M)f)+\dim_L(f/P(M))$$ $$\leq (k+1)(t-1)+k$$ which is a contradiction. The result follows. \endproof

We are already assuming that $Ma$ is $P$-independent. By Lemma \ref{lemma}(3), we may also assume that $Mb$ is $P$-independent. Using Lemma \ref{lemma}(1), let $e'$ be $\text{acl}_L$-independent over $MP(U)$, $f'$ a tuple in $P(N)$ which is $\text{acl}_L$-independent over $P(M)$ and $g'$ a tuple in $\text{acl}_L(Me'f')$ such that $b=e'f'g'$. By Lemmas \ref{over P} and \ref{in P}, $ee'$ is $\text{acl}_L$-independent over $MP(U)$ and $ff'$ is $\text{acl}_L$-independent over $P(M)$. So, by Lemma \ref{lemma}(1), $Mab$ is $P$-independent. 

Let $\theta(x)$ and $\theta'(y)$ be $L$-formulas over $M$ that witness $a\in\text{acl}_L(Mef)$ and $b\in\text{acl}_L(Me'f')$ respectively. Let $q(x,y)$ be the partial subtype of $\text{tp}_{L_P}(ab/M)$ that contains $\theta(x)$ and $\theta(y)$ and asserts that $ee'$ is $\text{acl}_L$-independent over $MP(U)$, $ff'$ is a tuple in $P(U)$ and $ff'$ is $\text{acl}_L$-independent over $P(M)$. By Theorem \ref{basic}, Lemma \ref{lemma} and compactness, there exist an $L$-formula $\varphi'(x,y)$ over $M$ and some $\alpha\in q$ such that $$\models \forall x \forall y (\alpha(x,y)\rightarrow (\varphi(x,y)\leftrightarrow\varphi'(x,y)).$$ Say $e$ has length $r$ and $f$ has length $k$. Let $A$ be the set of all $a'\in U^{\left|x\right|}$ such that 

\begin{enumerate}
\item the first $r+k$ coordinates of $a'$ are $\text{acl}_L$-independent over $N$ and
\item $a'$ realises $\theta$.
\end{enumerate} 

\begin{lemma}
    The set $\{\varphi'(x,b'):b'\in N,b'\models \text{tp}_L(b/M)\}$ is realised in $A$.
\end{lemma}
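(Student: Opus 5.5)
The plan is to produce, for each finite set $B$ of realisations in $N$ of $\text{tp}_L(b/M)$, a point of $A$ satisfying $\varphi'(x,b')$ for every $b'\in B$. Since $A$ is type-definable over $N$, compactness then gives a single element of $A$ realising the whole set. The natural candidates are images of $a$ under automorphisms of $(U,P(U))$ over $M$.

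First I use non-dividing. Since $\text{tp}_{L_P}(a/N)$ is invariant over $M$ and $\models\varphi(a,b)$, we get $\models\varphi(a,b^*)$ for every $b^*\in N$ with $\text{tp}_{L_P}(b^*/M)=\text{tp}_{L_P}(b/M)$. Moreover, for such $b^*$ the tuple $Mab^*$ is $P$-independent. Indeed, we may assume $b^*$ is $P$-independent over $M$ in the same shape $e^*f^*g^*$, and then apply Lemmas \ref{over P} and \ref{in P} and Lemma \ref{lemma}(1) exactly as was done for $b$. Hence $(a,b^*)$ realises $q$, so it satisfies $\alpha$, and therefore $\models\varphi'(a,b^*)$. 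Also $a\in A$: its first $r+k$ coordinates are $ef$, which is $\text{acl}_L$-independent over $N$ by Lemmas \ref{over P} and \ref{in P}, and $a$ realises $\theta$.

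The problem is that the set in the statement ranges over realisations of the $L$-type of $b$, not its $L_P$-type. So, given a finite $B=\{b_1',\dots,b_n'\}\subseteq N$ of realisations of $\text{tp}_L(b/M)$, I want to move $B$ by an $L$-automorphism to tuples of the correct $L_P$-type while keeping control of $A$. The step I expect to be the main obstacle is the following. Inside $N$, choose $b_1^*,\dots,b_n^*$ with $\text{tp}_L(b_1^*\dots b_n^*/M)=\text{tp}_L(b_1'\dots b_n'/M)$ such that each $b_i^*$ satisfies the $P$-configuration of $b$: the $e$-part is $\text{acl}_L$-independent over $MP(U)$ and the $f$-part lies in $P$ and is independent over $P(M)$.

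To get these I would use the density axioms of Definition \ref{dense pair}, together with saturation of $N$ and compactness, to realise the $L$-type with the required configuration. Condition (1) lets the $f$-coordinates be placed inside $P(N)$, and condition (2) lets the $e$-coordinates avoid $\text{acl}_L(MP(U)\cup\text{finite})$. This is the same manoeuvre used in the proofs of Lemmas \ref{over P} and \ref{in P}. A subtlety is that the $f$-coordinates of the $b_i'$ may satisfy $L$-relations among themselves over $M$. These must be realised inside $P$, which is possible because $P(N)\prec N$ and the relations are all over $P(M)$-independent data. By Lemma \ref{basic} and Lemma \ref{lemma}(2), each $b_i^*$ then has the same $L_P$-type over $M$ as $b$. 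By the first paragraph, $\models\varphi'(a,b_i^*)$ for all $i$.

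Finally I transfer back. Take an $L$-automorphism $\sigma$ of $U$ over $N$'s relevant part; more precisely, $\sigma$ should fix $M$ and send $b_i^*\mapsto b_i'$. Realise $a'=\sigma(a)$, choosing it by compactness so that the first $r+k$ coordinates of $a'$ remain $\text{acl}_L$-independent over $N$. This is possible since the $L$-type of $ef$ over $N$ is the unique generic type, which is $L$-invariant over $M$, so we can pick $a'$ with $\text{tp}_L(a'b'_1\dots b'_n/M)=\text{tp}_L(ab^*_1\dots b^*_n/M)$ and generic over $N$. Then $a'$ realises $\theta$, so $a'\in A$, and $\models\varphi'(a',b_i')$ for all $i$. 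Compactness completes the proof.
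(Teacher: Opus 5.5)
\textbf{The gap} is in the step you flag as the main obstacle, and that step is false in general. For an arbitrary finite $B=\{b'_1,\dots,b'_n\}$ of realisations of $\text{tp}_L(b/M)$ in $N$, there need not exist $b^*_1,\dots,b^*_n$ with the same joint $L$-type over $M$ such that every $b^*_i$ has the $P$-configuration of $b$. The joint type can contain relations with parameters from $M\setminus P(M)$. It can also tie an $e$-coordinate of one $b'_i$ to an $f$-coordinate of another.

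For a concrete failure, let $T$ be RCF and $b=f'\in P(N)$ with $f'>M$. Then $b$ has no $e$- or $g$-part, and $\text{tp}_L(b/M)$ is just the cut ``$y>M$''. Take $v\in N$ with $v>M$ and $m\in M\setminus P(M)$. Then $b'_1=v$ and $b'_2=v+m$ both realise $\text{tp}_L(b/M)$. But any $L$-conjugate pair over $M$ has difference $m\notin P(U)$, so its two entries cannot both lie in $P(U)$, whereas $P(y)\in\text{tp}_{L_P}(b/M)$.

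A second failure mode: take $b=(e',f')$ with $e'>M$, $f'>\text{dcl}_L(Me')$, and $u_2>M$, $u_1>\text{dcl}_L(Mu_2)$, $v_1>\text{dcl}_L(Mu_1u_2)$ in $N$. The pair $(u_1,v_1),(u_2,u_1)$ would force a single element to be both in $P(U)$ and $\text{acl}_L$-independent over $MP(U)$.

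So your claim that relations among $f$-coordinates can be realised in $P$ ``because $P(N)\prec N$'' fails, and the cross-relations are not addressed at all. Your transfer mechanism rests entirely on $L$-conjugating $B$ onto tuples of the right $L_P$-type, so it cannot be completed. Note also that you never use that $\varphi'$ is NIP. That hypothesis is exactly what is needed to pass from the $L_P$-type of $b$ to its $L$-type.

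\textbf{The paper's route} avoids arbitrary $B$. It takes a strict Morley sequence $(b_n)$ of $\text{tp}_L(b/M)$ in $N$. There $e'_nf'_n$ is $\text{acl}_L$-independent over $Mb_1\dots b_{n-1}$, so no relations link the pieces. Definition \ref{dense pair} then lets one realise the same $L$-type over $M$ with $\dim_L(e'_n/Me'_1\dots e'_{n-1}P(U))=r'$ and $f'_n\in P(N)$. By Lemmas \ref{basic} and \ref{lemma}(2), each $b_n$ has the $L_P$-type of $b$ over $M$, so $a$ realises $\{\varphi'(x,b_n):n\in\mathbb{N}\}$ in $A$; this part is your second paragraph. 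Proposition \ref{refined equivalence} is then applied in $T$ to the NIP formula $\varphi'$ and to $A$, which is type-definable over $N$ by an $M$-invariant partial type. It upgrades this to the whole set $\{\varphi'(x,b'):b'\in N,\ b'\models\text{tp}_L(b/M)\}$.

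\textbf{A smaller point} in your last step: in RCF the generic type of $ef$ over $N$ is not unique. What you would actually need is that a type over $Mb'_1\dots b'_n$ has an extension to $N$ of the same $\text{acl}_L$-dimension, which does hold in geometric theories.
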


\proof With $e',f', g'$ as above, say $e'$ has length $r'$ and $f'$ has length $k'$. Let $(b_n)_{n\in\mathbb{N}}$ be a strict Morley sequence for $\text{tp}_L(b/M)$ in $N$. For each $n$, we have $b_n=e'_nf'_ng'_n$, where $e'_n$ has length $r'$ and $f'_n$ has length $k'$. For each $n$, $e'_nf'_n$ is $\text{acl}_L$-independent over $Mb_1...b_{n-1}$ and $b_n$ realises $\theta'(y)$. By Definition \ref{dense pair}, we may assume that, for each $n\in\mathbb{N}$, $\dim_L(e'_n/Me'_1...e'_{n-1}P(U))=r'$ and $f'_n\in P(N)$. It then follows, by Lemma \ref{lemma}(2) and Theorem \ref{basic}, that $\text{tp}_{L_P}(b_n/M)=\text{tp}_{L_P}(b/M)$, for all $n\in\mathbb{N}$. Since $\text{tp}_{L_P}(a/N)$ is invariant over $M$, it follows that $\{\varphi'(x,b_n):n\in\mathbb{N}\}$ is realised in $A$ by $a$. It follows by Proposition \ref{refined equivalence} that $\{\varphi'(x,b'):b'\in N,b'\models \text{tp}_L(b/M)\}$ is realised in $A$. \endproof 

By Theorem \ref{refined version}, there is an $L$-formula $\psi'(y)\in \text{tp}_L(b/M)$ such that $\{\varphi'(x,b'):b'\in \psi'(N)\}$ is realised in $A$. Let $a'$ be a realisation. By Definition \ref{dense pair}, we may assume the first $r$ coordinates of $a'$ are $\text{acl}_L$-independent over $NP(U)$ and the next $k$ coordinates are in $P(U)$. Let $\psi\in \text{tp}_{L_P}(b/M)$ be such that, for all $b'\in N^{|y|}$, if $b'$ realises $\psi$ then $b'$ realises $\psi'$ and $a'b$ realises $\alpha$. Then $\{\varphi(x,b'):b'\in\psi(N)\}$ is realised in $A$ and so is consistent.

\bibliographystyle{plain}
\bibliography{papier}

\end{document}